\definecolor{burgundy}{rgb}{0.5, 0.0, 0.13}
\numberwithin{equation}{section}
\newtheorem{theorem}{Theorem}[section]
\theoremstyle{plain}
\theoremstyle{plain}
\newtheorem{proposition}[theorem]{Proposition}
\theoremstyle{plain}
\newtheorem{definition}[theorem]{Definition}
\theoremstyle{definition}
\newtheorem{remark}[theorem]{Remark}
\newtheorem{example}[theorem]{Example}
\newcommand{\N}{{\mathbb N}}
\newcommand{\R}{{\mathbb R}}
\newcommand{\eps}{\varepsilon}
\newcommand{\s}{H^s_0(\Omega)}
\newcommand{\h}{{\bf H}_1}
\newcommand{\hh}{{\bf H}_2}
\newcommand{\beq}{\begin{equation}}
\newcommand{\eeq}{\end{equation}}
\renewcommand{\le}{\leqslant}
\renewcommand{\ge}{\geqslant}
\newcommand{\fl}{(-\Delta)^{s\,}}
\newenvironment{enumroman}{\begin{enumerate}

}{\end{enumerate}}
\title[Nonlinear fractional Laplacian equations]{Three nontrivial solutions for nonlinear fractional Laplacian equations}
\author[F.G.\ D\"uzg\"un, A. Iannizzotto]{Fatma Gamze D\"uzg\"un, Antonio Iannizzotto}
\address[F.G.\ D\"uzg\"un]{Department of Mathematics
\newline\indent
Hacettepe University
\newline\indent
06800, Beytepe, Ankara, Turkey}
\email{gamzeduz@hacettepe.edu.tr}
\address[A.\ Iannizzotto]{Department of Mathematics and Computer Science
\newline\indent
University of Cagliari
\newline\indent
Viale L. Merello 92, 09123 Cagliari, Italy}
\email{antonio.iannizzotto@unica.it}
\subjclass[2010]{35R11, 35P30, 49F15.}
\keywords{Fractional Laplacian, eigenvalue problems, Morse theory.}
\begin{document}

\begin{abstract}
We study a Dirichlet-type boundary value problem for a pseudodifferential equation driven by the fractional Laplacian, proving the existence of three nonzero solutions. When the reaction term is sublinear at infinity, we apply the second deformation theorem and spectral theory. When the reaction term is superlinear at infinity, we apply the mountain pass theorem and Morse theory.
\end{abstract}

\maketitle

\begin{center}
Version of \today\
\end{center}

\section{Introduction}\label{sec1}

\noindent
The present paper deals with the following Dirichlet-type boundary value problem for a nonlinear equation driven by the fractional Laplacian:
\beq\label{pro}
\begin{cases}
\fl u=f(x,u) & \text{in $\Omega$} \\
u=0 & \text{in $\Omega^c$,}
\end{cases}
\eeq
where $\Omega\subseteq\R^N$ ($N>1$) is a bounded domain with a $C^2$ boundary, $\Omega^c=\R^N\setminus\Omega$, $s\in(0,1)$, and $f:\Omega\times\R\to\R$ is a Carath\'eodory function. The fractional Laplacian operator is defined for any sufficiently smooth function $u:\R^N\to\R$ and all $x\in\R$ by
\beq\label{fl}
\fl u(x)=C_{N,s}\lim_{\eps\to 0^+}\int_{\R^N\setminus B_\eps(x)}\frac{u(x)-u(y)}{|x-y|^{N+2s}}\,dy,
\eeq
where $C_{N,s}>0$ is a suitable normalization constant. Throughout the paper we will always assume $C_{N,s}=1$ (for a precise evaluation of $C_{N,s}$, consistent with alternative definitions of the fractional Laplacian, see \cite[Remark 3.11]{CS1}).
\vskip2pt
\noindent
Fractional operators have gained increasing popularity in recent years. This is due both to the intrinsic mathematical interest of such subject, and to the various applications that they allow. Indeed, nonlocal pseudodifferential operators such as $\fl$ are naturally involved in continuum mechanics, population dynamics, game theory and other phenomena, as the infinitesimal generators of L\'evy-type stochastical processes (see \cite{C1}).
\vskip2pt
\noindent
Roughly speaking, the outstanding feature of operators like $\fl$ is {\em nonlocality}, i.e., the dependence of $\fl u(x)$ on the values of $u(y)$ not only for $y$ conveniently near to $x$, but but for all $y\in\R^N$. While such nonlocality makes our operator particularly suitable to describe phenomena allowing 'jumps', it makes things delicate in dealing with regularity, sign, and other typically {\em local} attributes of solutions. This is one reason why the study of nonlinear equations involving $\fl$ (or closely related operators) started with the case when the domain is $\R^N$, providing existence of solutions, regularity, {\em a priori} bounds and maximum principles (see \cite{CS1,CS2}, and \cite{AP} for some existence results). The natural functional setting for such study is provided by fractional Sobolev spaces (see \cite{DPV}).
\vskip2pt
\noindent
On the other hand, nonlocality obviously produces some difficulties in finding an analogous to Dirichlet-type boundary conditions on bounded domains. The standard formulation of the Dirichlet problem for fractional equations in a bounded domain $\Omega$ was set in the series of papers \cite{SV,SV1,SV2}, simply by requiring that the solution $u$ vanishes a.e.\ outside $\Omega$. Our problem \eqref{pro} follows such standard. While interior regularity of solutions of \eqref{pro} can be handled just as in the unbounded case, boundary regularity and behaviour of solutions (e.g.\ the Hopf property) came forth as a serious difficulty, which was mostly overcome by means of weighted H\"older-type function spaces (see \cite{BCSS,GS1,IMS,RS}).
\vskip2pt
\noindent
Once provided with the appropriate functional formulation, problem \eqref{pro} becomes variational, in the sense that its weak solutions can be detected as critical points of a $C^1$ energy functional $\varphi$, defined on a fractional Sobolev space. So, we can prove existence and multiplicity of such solutions by applying to $\varphi$ several abstract results of critical point theory, such as minimax principles (see \cite{R}) and Morse theory (see \cite{C}). Some results of this type can be found, for instance, in \cite{BMS,CW,F,IS,MR,MP,ZF}.
\vskip2pt
\noindent
In the present paper, we will employ much of the research accomplished so far in order to prove the existence of three nonzero solutions for problem \eqref{pro} (one positive, one negative, and the third with indefinite sign), when $f(x,\cdot)$ has a subcritical growth and satisfies convenient conditions at zero and at infinity. Precisely, we will consider two cases:
\begin{itemize}[leftmargin=1cm]
\item[{\bf (a)}] if $f(x,\cdot)$ is {\em sublinear} at infinity, and at most linear at zero, then we apply the second deformation theorem and some spectral properties of $\fl$ (namely, a characterization of the second eigenvalue which, for the local case, goes back to \cite{CFG});
\item[{\bf (b)}] if $f(x,\cdot)$ is {\em superlinear} at infinity, and satisfies a mild version of the Ambrosetti-Rabinowitz condition, then we apply the mountain pass theorem and the Poincar\'e-Hopf identity based on the computation of critical groups (thus proving a nonlocal analogous of the result of \cite{W}).
\end{itemize}
In both cases, truncations of the energy functional $\varphi$ will be an essential tool, so we will make use of a topological result established in \cite{IMS}, which relates local minimizers of the truncated and uncut functionals, respectively.
\vskip2pt
\noindent
Our work strongly relies on the joint application of mutually independent results, and we decided to privilege simplicity rather than generality. One possible generalization of our results is towards linear, nonlocal operators of the type
\[\mathcal{L}_K u(x)=\lim_{\eps\to 0^+}\int_{\R^N\setminus B_\eps(x)}\frac{u(x)-u(y)}{K(x,y)}\,dy,\]
where $K:\R^N\times\R^N\to\R_+$ is a weight function exhibiting an asymptotic behaviour similar to that of the standard weight $|x-y|^{N+2s}$ (see \cite{SV}). Another possible extension may deal with the fractional $p$-Laplacian, namely the nonlinear, nonlocal operator defined by
\[(-\Delta)_p^s u(x)=2\lim_{\eps\to 0^+}\int_{\R^N\setminus B_\eps(x)}\frac{|u(x)-u(y)|^{p-2}(u(x)-u(y))}{|x-y|^{N+2s}}\,dy,\]
where $p\in(1,\infty)$. Some existence and multiplicity results for fractional $p$-Laplacian problems, obtained through critical point theory and Morse theory, can be found in \cite{ILPS}. Nevertheless, the methods used in the present paper cannot be easily extended to $(-\Delta)_p^s$ due to the lack of a complete boundary regularity theory like that developed in \cite{RS} for $\fl$ (some results in this direction are proved in \cite{IMS1}).
\vskip2pt
\noindent
The paper has the following structure: in Section \ref{sec2} we recall the variational formulation of our problem and some basic properties of solutions, together with some results from critical point theory; in Section \ref{sec3} we prove our multiplicity result for the sublinear case; and in Section \ref{sec4} we deal with the superlinear case.

\section{Preliminary results}\label{sec2}

\noindent
In this section we recall some results that will be used in our arguments.

\subsection{Variational formulation and some properties of problem \eqref{pro}}\label{ss21}

For all measurable function $u:\R^N\to\R$ we set
\[[u]_{s,2}^2=\iint_{\R^N\times\R^N}\frac{(u(x)-u(y))^2}{|x-y|^{N+2s}}\,dx\,dy,\]
then we define the fractional Sobolev space
\[H^s(\R^N)=\{u\in L^2(\R^N):\,[u]_{s,2}<\infty\}\]
(see \cite{DPV}). We restrict ourselves to the subspace
\[H^s_0(\Omega)=\{u\in H^s(\R^N):\,u(x)=0 \ \text{for a.e. $x\in\Omega^c$}\},\]
which is a separable Hilbert space under the norm $\|u\|=[u]_{s,2}$ (see \cite{SV}). We denote by $H^{-s}(\Omega)$ the topological dual of $\s$ and by $\langle\cdot,\cdot\rangle$ the scalar product of $\s$ (or the duality pairing between $H^{-s}(\Omega)$ and $\s$). In this connection we mention the following useful inequality, holding for all $u\in\s$:
\beq\label{np}
\iint_{\R^N\times\R^N}\frac{(u(x)-u(y))(u^-(x)-u^-(y))}{|x-y|^{N+2s}}\,dx\,dy\le -\|u^-\|^2,
\eeq
where $u^-$ stands for the negative part of $u$ (see \cite{IMS}). The critical exponent is defined as $2^*_s=\frac{2N}{N-2s}$, and the embedding $H^s_0(\Omega)\hookrightarrow L^p(\Omega)$ is continuous and compact for all $p\in[1,2^*_s)$ (see \cite[Lemma 8]{DPV}). Moreover, we introduce the positive order cone
\[H^s_0(\Omega)_+=\{u\in H^s_0(\Omega):\,u(x)\ge 0 \ \text{for a.e. $x\in\Omega$}\},\]
which has an empty interior with respect to the $H^s_0(\Omega)$-topology. The space $H^s_0(\Omega)$ provides the natural framework for the study of problem \eqref{pro}:

\begin{definition}\label{ws}
A function $u\in H^s_0(\Omega)$ is a (weak) solution of \eqref{pro} if for all $v\in H^s_0(\Omega)$
\[\iint_{\R^N\times\R^N}\frac{(u(x)-u(y))(v(x)-v(y))}{|x-y|^{N+2s}}\,dx\,dy=\int_\Omega f(x,u)v\,dx.\]
\end{definition}

\noindent
In all the forthcoming results we will assume the following hypothesis on the nonlinearity $f$:
\begin{itemize}[leftmargin=1cm]
\item[${\bf H}_0$] $f:\Omega\times\R\to\R$ is a Carath\'eodory mapping, satisfying
\[|f(x,t)|\le a_0(1+|t|^{p-1}) \ \text{for a.e. $x\in\Omega$ and all $t\in\R$ ($a_0>0$, $p\in(1,2^*_s)$).}\]
\end{itemize}
Under such assumption, we are able to extend to problem \eqref{pro} some basic results holding for elliptic boundary value problems, starting with a simple {\em a priori} bound:

\begin{proposition}\label{apb}
{\rm \cite[Theorem 3.2]{IMS}} Let ${\bf H}_0$ hold. Then there exists a continuous, nondecreasing function $M:\R_+\to\R_+$ s.t.\ for all $u\in H^s_0(\Omega)$ weak solution of \eqref{pro} one has $u\in L^\infty(\Omega)$ and
\[\|u\|_\infty\le M(\|u\|_{2^*_s}).\]
\end{proposition}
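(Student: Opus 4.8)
The plan is to prove the $L^\infty$ bound by a Moser-type iteration adapted to the nonlocal framework, using as the only quantitative input the number $\rho=\|u\|_{2^*_s}$, which is finite by the continuous embedding $\s\hookrightarrow L^{2^*_s}(\Omega)$. Fix a weak solution $u\in\s$ of \eqref{pro}. For parameters $\beta\ge 1$ and $k>0$ I would use the truncated powers
\[g_{\beta,k}(t)=t\,\big(\min\{|t|,k\}\big)^{\beta-1},\qquad G_{\beta,k}(t)=\int_0^t\sqrt{g_{\beta,k}'(\tau)}\,d\tau,\]
observing that $g_{\beta,k}$ is Lipschitz, odd, nondecreasing and vanishes at $0$, so that $v=g_{\beta,k}(u)\in\s$ is an admissible test function in Definition \ref{ws}; moreover $|g_{\beta,k}(t)|\le|t|^\beta$, while $G_{\beta,k}(t)\to\tfrac{2\sqrt\beta}{\beta+1}\,|t|^{(\beta-1)/2}t$ pointwise as $k\to\infty$.

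The key tool is the nonlocal analogue of the Caccioppoli inequality: from the elementary pointwise estimate $(a-b)\big(g_{\beta,k}(a)-g_{\beta,k}(b)\big)\ge\big(G_{\beta,k}(a)-G_{\beta,k}(b)\big)^2$ (a consequence of the Cauchy--Schwarz inequality, since $g_{\beta,k}$ is nondecreasing with $(G_{\beta,k}')^2=g_{\beta,k}'$), dividing by $|x-y|^{N+2s}$ and integrating over $\R^N\times\R^N$ with $a=u(x)$, $b=u(y)$, then using Definition \ref{ws} with the test function $g_{\beta,k}(u)$ together with ${\bf H}_0$ and $|g_{\beta,k}(u)|\le|u|^\beta$, I obtain
\[[G_{\beta,k}(u)]_{s,2}^2\le\big\langle u,\,g_{\beta,k}(u)\big\rangle=\int_\Omega f(x,u)\,g_{\beta,k}(u)\,dx\le a_0\int_\Omega\big(1+|u|^{p-1}\big)|u|^\beta\,dx.\]
Applying the embedding $\s\hookrightarrow L^{2^*_s}(\Omega)$ to $G_{\beta,k}(u)$, letting $k\to\infty$ (Fatou's lemma on the left, the right-hand side being independent of $k$), and using $|\Omega|<\infty$ to dominate the lower-order term, this gives the self-improving step: if $u\in L^{\beta+p-1}(\Omega)$ then $u\in L^{(\beta+1)2^*_s/2}(\Omega)$, with
\[\|u\|_{(\beta+1)2^*_s/2}\le\Big(C(\beta)\,\big(1+\|u\|_{\beta+p-1}^{\,\beta+p-1}\big)\Big)^{1/(\beta+1)},\]
where $C(\beta)$ grows at most polynomially in $\beta$ and depends otherwise only on $N$, $s$, $p$, $a_0$, $|\Omega|$.

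I would then iterate along the exponents $r_0=2^*_s$ and $r_{n+1}=\tfrac{2^*_s}{2}(r_n-p+2)$, which corresponds to the choice $\beta_n=r_n-p+1$ (admissible since $p<2^*_s$ forces $\beta_0>1$). A direct computation gives $r_{n+1}-r_n=\tfrac{2^*_s-2}{2}\,r_n+\tfrac{(2-p)2^*_s}{2}$, which equals $\tfrac{2^*_s}{2}(2^*_s-p)>0$ at $r_0$ and is increasing in $r_n$; hence $(r_n)$ is strictly increasing and diverges to $+\infty$, geometrically (the ratio $r_{n+1}/r_n\to 2^*_s/2>1$). Taking logarithms in the previous estimate and telescoping, and using that $\beta_n+1\sim r_n$ grows geometrically while $\log C(\beta_n)=O(\log r_n)$, the resulting series converges, so that $\sup_n\|u\|_{r_n}<\infty$; since $\|u\|_{r_n}\to\|u\|_\infty$ (because $|\Omega|<\infty$), this yields $u\in L^\infty(\Omega)$ and $\|u\|_\infty\le C\big(1+\|u\|_{2^*_s}^{\,\theta}\big)$ for suitable $C,\theta>0$ depending only on $N$, $s$, $p$, $a_0$, $|\Omega|$. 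Setting $M(\rho)=C(1+\rho^\theta)$ --- continuous and nondecreasing on $\R_+$ --- finishes the proof. (Alternatively, one could bootstrap through the $L^q\to L^{q^{**}_s}$ mapping property of the solution operator of $\fl$, with $\tfrac1{q^{**}_s}=\tfrac1q-\tfrac{2s}{N}$: since $p<2^*_s$, starting from $f(x,u)\in L^{2^*_s/(p-1)}(\Omega)$ the integrability exponents exceed $N/(2s)$ after finitely many steps, whence $u\in L^\infty(\Omega)$.)

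The step I expect to be the main obstacle is the bookkeeping in the iteration: one must check that the constants $C(\beta_n)$ grow slowly enough --- polynomially --- compared with the geometric growth of $\beta_n$ for the telescoped product to stay finite, and this is precisely where the strict subcriticality $p<2^*_s$ enters (it also guarantees $r_{n+1}>r_n$ already at $n=0$). The remaining ingredients --- admissibility of the truncated test functions in $\s$, the pointwise inequality above, and the passage to the limit $k\to\infty$ --- are routine.
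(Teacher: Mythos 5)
This proposition is not proved in the paper at all: it is quoted verbatim from \cite[Theorem 3.2]{IMS}, so there is no in-text argument to compare against. Your Moser-type iteration --- testing with the truncated powers $g_{\beta,k}(u)$, transferring the Gagliardo seminorm through the pointwise inequality $(a-b)\big(g_{\beta,k}(a)-g_{\beta,k}(b)\big)\ge\big(G_{\beta,k}(a)-G_{\beta,k}(b)\big)^2$, and bootstrapping along $r_{n+1}=\tfrac{2^*_s}{2}(r_n-p+2)$ --- is correct and is essentially the argument of the cited reference, with the strict subcriticality $p<2^*_s$ entering exactly where you say it does. The only points worth spelling out are that $g_{\beta,k}(u)\in H^s_0(\Omega)$ because $g_{\beta,k}$ is Lipschitz with $g_{\beta,k}(0)=0$ (so it is an admissible test function), and that all constants in the telescoped product depend only on $N$, $s$, $p$, $a_0$, $|\Omega|$ and not on $u$, which is precisely what makes $M$ a single continuous nondecreasing function valid for every weak solution.
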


\noindent
While solutions of fractional equations exhibit good interior regularity properties, they may have a singular behaviour on the boundary. So, instead of the usual space $C^1(\overline\Omega)$, they are better embedded in the following weighted H\"older-type spaces. Set $\delta(x)={\rm dist}(x,\Omega^c)$ for all $x\in\R^N$ and define
\[C^0_\delta(\overline\Omega)=\Big\{u\in C^0(\overline\Omega):\,\frac{u}{\delta^s}\in C^0(\overline\Omega)\Big\},\]
\[C^\alpha_\delta(\overline\Omega)=\Big\{u\in C^0(\overline\Omega):\,\frac{u}{\delta^s}\in C^\alpha(\overline\Omega)\Big\} \ (\alpha\in(0,1)),\]
endowed with the norms
\[\|u\|_{0,\delta}=\Big\|\frac{u}{\delta^s}\Big\|_\infty, \ \|u\|_{\alpha,\delta}=\|u\|_{0,\delta}+\sup_{x\neq y}\frac{|u(x)/\delta^s(x)-u(y)/\delta^s(y)|}{|x-y|^\alpha},\]
respectively. For all $0\le\alpha<\beta<1$ the embedding $C^\beta_\delta(\overline\Omega)\hookrightarrow C^\alpha_\delta(\overline\Omega)$ is continuous and compact. In this case, the positive cone $C^0_\delta(\overline\Omega)_+$ has a nonempty interior given by
\beq\label{cone}
{\rm int}\,(C^0_\delta(\overline\Omega)_+)=\Big\{u\in C^0_\delta(\overline\Omega):\,\frac{u(x)}{\delta^s(x)}>0 \ \text{for all $x\in\overline\Omega$}\Big\}.
\eeq
From Proposition \ref{apb} and \cite[Theorem 1.2]{RS} we have the following global regularity result:

\begin{proposition}\label{reg}
Let ${\bf H}_0$ hold. Then there exist $\alpha\in(0,\min\{s,1-s\})$ and $C>0$ s.t.\ for all $u\in H^s_0(\Omega)$ weak solution of \eqref{pro} one has $u\in C^\alpha_\delta(\overline\Omega)$ and
\[\|u\|_{\alpha,\delta}\le C(1+\|u\|_{2^*_s}).\]
\end{proposition}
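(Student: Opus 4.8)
The plan is to run a short bootstrap, the analytic substance being entirely carried by the two results just quoted. Fix a weak solution $u\in\s$ of \eqref{pro}. By Proposition \ref{apb} we already know $u\in L^\infty(\Omega)$ with $\|u\|_\infty\le M(\|u\|_{2^*_s})$, $M$ continuous and nondecreasing. Hypothesis $\mathbf{H}_0$ then gives, for a.e.\ $x\in\Omega$,
\[
|f(x,u(x))|\le a_0\big(1+|u(x)|^{p-1}\big)\le a_0\big(1+M(\|u\|_{2^*_s})^{p-1}\big),
\]
so the reaction $g:=f(\cdot,u)$ lies in $L^\infty(\Omega)$, with $\|g\|_\infty$ dominated by a continuous nondecreasing function of $\|u\|_{2^*_s}$ only.

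Next I would read $u$ as a weak solution of the \emph{linear} Dirichlet problem $\fl u=g$ in $\Omega$, $u=0$ in $\Omega^c$, with datum $g\in L^\infty(\Omega)$, and invoke the boundary regularity theorem \cite[Theorem 1.2]{RS}: there are $\alpha\in(0,\min\{s,1-s\})$ and $C>0$, depending only on $N$, $s$ and $\Omega$ (and \emph{not} on $u$ or $g$), such that $u/\delta^s$ extends continuously to $\overline\Omega$ with $u/\delta^s\in C^\alpha(\overline\Omega)$ and
\[
\Big\|\frac{u}{\delta^s}\Big\|_{C^\alpha(\overline\Omega)}\le C\,\|g\|_\infty;
\]
in the notation of the excerpt this is exactly $u\in C^\alpha_\delta(\overline\Omega)$ with $\|u\|_{\alpha,\delta}\le C\|g\|_\infty$. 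Chaining this with the bound on $\|g\|_\infty$ from the first step, and absorbing the monotone function $M$ and the exponent $p-1$ into the constant and the right-hand side, yields $\|u\|_{\alpha,\delta}\le C(1+\|u\|_{2^*_s})$.

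I do not expect a genuine obstacle here: the hard work is already packaged in \cite[Theorem 3.2]{IMS} (a De Giorgi--Nash--Moser-type $L^\infty$ estimate) and in \cite[Theorem 1.2]{RS} (the fine boundary regularity for the linear fractional Dirichlet problem). The only two points to keep an eye on are: (i) that $f(\cdot,u)\in L^\infty(\Omega)$ — immediate from $\mathbf{H}_0$ once $u\in L^\infty(\Omega)$, which is where the subcritical growth $p<2^*_s$ enters, through Proposition \ref{apb}; and (ii) that the constant $C$ produced by \cite[Theorem 1.2]{RS} is independent of the particular solution, so that the estimate is uniform across all weak solutions sharing the same value of $\|u\|_{2^*_s}$. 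The admissible range $\alpha\in(0,\min\{s,1-s\})$ is precisely the one delivered by \cite[Theorem 1.2]{RS}, so no optimization over $\alpha$ is required.
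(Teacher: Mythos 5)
Your proof is correct and follows exactly the route the paper takes: the paper offers no detailed argument for Proposition \ref{reg}, merely asserting that it follows ``from Proposition \ref{apb} and \cite[Theorem 1.2]{RS}'', which is precisely the two-step bootstrap (the $L^\infty$ bound on $u$, hence on $f(\cdot,u)$ via ${\bf H}_0$, then the Ros-Oton--Serra boundary estimate) that you spell out. The only point worth flagging is that ``absorbing the monotone function $M$ and the exponent $p-1$'' into $C(1+\|u\|_{2^*_s})$ is not literally valid for an arbitrary continuous nondecreasing $M$, but this imprecision is inherited from the paper's own statement and is harmless for the way the proposition is actually used later (uniform $C^\alpha_\delta(\overline\Omega)$ bounds for sequences of solutions that are bounded in $L^{2^*_s}(\Omega)$).
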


\noindent
We turn now to sign properties of solutions of \eqref{pro}. We begin with a weak maximum principle:

\begin{proposition}\label{wmp}
{\rm \cite[Theorem 2.4]{IMS}} Let ${\bf H}_0$ hold and $f(x,t)\ge 0$ for a.e. $x\in\Omega$ and all $t\in\R$. If $u\in H^s_0(\Omega)$ is a solution of \eqref{pro}, then $u$ is lower semicontinuous and $u(x)\ge 0$ for all $x\in\Omega$.
\end{proposition}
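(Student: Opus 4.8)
The plan is to prove first that $u\ge 0$ almost everywhere, by a standard choice of test function in the weak formulation, and then to upgrade this to the pointwise inequality (and to the lower semicontinuity) using the global regularity already available. The only properly nonlocal ingredient will be the algebraic inequality \eqref{np}; it plays here the part that, in the classical local setting, is played by the identity $\int_\Omega\nabla u\cdot\nabla u^-\,dx=-\int_\Omega|\nabla u^-|^2\,dx$.

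First I would check that $u^-\in\s$ whenever $u\in\s$ — the negative part acting continuously on fractional Sobolev spaces — so that $v=u^-$ is admissible in Definition \ref{ws}. Testing with $v=u^-$ and applying \eqref{np} to the resulting left-hand side yields
\[\int_\Omega f(x,u)\,u^-\,dx=\iint_{\R^N\times\R^N}\frac{(u(x)-u(y))(u^-(x)-u^-(y))}{|x-y|^{N+2s}}\,dx\,dy\le-\|u^-\|^2.\]
Since $f(x,u)\ge 0$ and $u^-\ge 0$ almost everywhere, the left-hand side is nonnegative, so $0\le-\|u^-\|^2\le 0$. Hence $\|u^-\|=0$, that is $u^-=0$ almost everywhere, and therefore $u(x)\ge 0$ for a.e.\ $x\in\Omega$.

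To conclude, I would invoke the regularity already granted: since ${\bf H}_0$ holds and $u$ solves \eqref{pro}, Proposition \ref{reg} gives $u\in C^\alpha_\delta(\overline\Omega)\subset C^0(\overline\Omega)$, so $u$ is continuous on $\overline\Omega$ and in particular lower semicontinuous. Continuity then turns the a.e.\ inequality into a pointwise one: for any $x\in\Omega$ pick $x_n\to x$ with $u(x_n)\ge 0$ (such points are dense, their complement being null), so that $u(x)=\lim_n u(x_n)\ge 0$.

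I do not expect a serious analytic obstacle, since the sign computation is routine once \eqref{np} is in hand. The one conceptual point worth stressing is that lower semicontinuity \emph{alone} would not upgrade the a.e.\ bound to a pointwise one — an lsc function may vanish almost everywhere yet be negative at a single point — so it is really the continuity from Proposition \ref{reg} that delivers $u(x)\ge 0$ for all $x$, the lower semicontinuity being only a weaker by-product. Should one prefer to bypass the boundary regularity of \cite{RS}, the lower semicontinuity and the underlying minimum principle could instead be recovered from nonlocal potential theory, by identifying $u$ with the canonical lsc representative of the $s$-superharmonic function determined by $\fl u=f\ge 0$.
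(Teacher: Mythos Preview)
The paper does not supply its own proof of this proposition: it is quoted verbatim from \cite[Theorem 2.4]{IMS} and stated without argument, so there is nothing in the present paper to compare your proposal against line by line.

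That said, your argument is correct and is essentially the natural one. Testing the weak formulation with $u^-$ and invoking \eqref{np} is precisely the nonlocal analogue of the classical Stampacchia truncation, and it yields $u\ge 0$ a.e.\ immediately. Upgrading to continuity via Proposition~\ref{reg} is legitimate in the logical order of the paper (regularity is stated before the maximum principle and depends only on ${\bf H}_0$), and your remark that mere lower semicontinuity would not by itself promote an a.e.\ inequality to a pointwise one is well taken. Your closing comment, that one could alternatively obtain the lsc representative from nonlocal potential theory without passing through the boundary regularity of \cite{RS}, is also accurate and closer in spirit to how such weak maximum principles are often proved in the literature; either route is acceptable here.
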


\noindent
Moreover, we have the following fractional Hopf lemma:

\begin{proposition}\label{hl}
{\rm \cite[Lemma 1.2]{GS1}} Let ${\bf H}_0$ hold and $f(x,t)\ge -ct$ for a.e. $x\in\Omega$ and all $t\in\R$ ($c>0$). If $u\in H^s_0(\Omega)_+$ is a solution of \eqref{pro}, $u$ lower semicontinuous, then either $u(x)=0$ for all $x\in\Omega$, or $u\in{\rm int}\,(C^0_\delta(\overline\Omega)_+)$.
\end{proposition}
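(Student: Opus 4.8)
The plan is to split the statement into an interior strong maximum principle and a Hopf-type boundary estimate, working with the pointwise form of the equation. By Proposition~\ref{reg} the solution $u$ belongs to $C^\alpha_\delta(\overline\Omega)$, so $u/\delta^s$ extends continuously to $\overline\Omega$, the function $u=(u/\delta^s)\,\delta^s$ is continuous on $\overline\Omega$ with $u\equiv 0$ on $\partial\Omega$ (in particular the lower semicontinuity assumption is automatically satisfied), and by interior regularity for $\fl$ (see \cite{RS}) $u$ is a classical solution inside $\Omega$, so $\fl u(x)$ is given pointwise by \eqref{fl} for every $x\in\Omega$. Since $u\ge 0$ and $f(x,t)\ge -ct$, this yields $\fl u(x)=f(x,u(x))\ge -cu(x)$, i.e.\ $\fl u+cu\ge 0$ pointwise in $\Omega$.

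\textbf{Step 1 (interior positivity).} First I would show that either $u\equiv 0$ on $\overline\Omega$ or $u>0$ throughout $\Omega$. Suppose $u(x_0)=0$ at some $x_0\in\Omega$. Then $x_0$ is a global minimum of $u$ on $\R^N$, and since $u(x_0)=0$ and $u\ge 0$, \eqref{fl} (recall $C_{N,s}=1$) gives
\[\fl u(x_0)=-\int_{\R^N}\frac{u(y)}{|x_0-y|^{N+2s}}\,dy\le 0,\]
while $\fl u(x_0)+cu(x_0)=\fl u(x_0)\ge 0$; hence $\fl u(x_0)=0$, which — the integrand being nonnegative — forces $u\equiv 0$ a.e.\ on $\R^N$, thus $u\equiv 0$ on $\overline\Omega$ by continuity. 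So if $u\not\equiv 0$ then $u>0$ in $\Omega$, and only the behaviour of $u/\delta^s$ at $\partial\Omega$ remains.

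\textbf{Step 2 (boundary estimate).} Assume $u>0$ in $\Omega$ and fix $x_0\in\partial\Omega$. Since $\partial\Omega$ is of class $C^2$, $\Omega$ satisfies an interior ball condition at $x_0$: there are $\rho>0$, $z\in\Omega$ with $B_\rho(z)\subseteq\Omega$, $x_0\in\partial B_\rho(z)$, and (shrinking $\rho$, using also the exterior ball condition) $\delta(x)\asymp\rho-|x-z|$ for $x\in B_\rho(z)$ near $x_0$. The aim is an estimate $u(x)\ge c_1\,\delta^s(x)$ (with $c_1>0$) for $x\in B_\rho(z)$ near $x_0$; granting it, continuity of $u/\delta^s$ up to $\partial\Omega$ gives $(u/\delta^s)(x_0)\ge c_1>0$, and as $x_0\in\partial\Omega$ is arbitrary and $u>0$ in $\Omega$, the description \eqref{cone} of ${\rm int}\,(C^0_\delta(\overline\Omega)_+)$ yields $u\in{\rm int}\,(C^0_\delta(\overline\Omega)_+)$. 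To obtain the estimate I would compare $u$ from below, on a suitable region $\omega$ abutting $x_0$, with an explicit positive barrier $\underline u$ of \emph{exact} order $\delta^s$, built from the $s$-torsion function of $B_\rho(z)$ (which solves $\fl\tau=1$ in $B_\rho(z)$, vanishes outside, and satisfies $\tau\asymp\delta_{B_\rho(z)}^s\asymp\delta^s$ near $x_0$) together with an $s$-harmonic correction reflecting the positive values of $u$ strictly inside $\Omega$; one chooses $\omega$ thin enough that the lower-order term $cu$ is controlled there (possible since $u$ is continuous and vanishes on $\partial\Omega$), arranges $\underline u\le u$ off $\omega$ and $\fl\underline u\le f(x,u)$ on $\omega$, and then a comparison principle for $\fl$ on $\omega$ gives $u\ge\underline u\gtrsim\delta^s$ near $x_0$.

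\textbf{Main obstacle.} The whole difficulty lies in Step 2: one needs a barrier that is positive, decays \emph{precisely} like $\delta^s$ at $x_0$ (not faster) so as to match the genuine boundary decay of solutions of \eqref{pro}, and at the same time is a subsolution of $\fl(\cdot)+c(\cdot)$ on a region touching $x_0$. Since $\fl$ of any $\delta^s$-type profile equals a positive constant near the boundary, these two demands can be reconciled only on a thin region on which $u$ is itself small, and making this quantitative requires the sharp boundary estimates for $\fl$, the explicit torsion function of a ball, and a precise comparison principle — exactly the ingredients assembled in \cite[Lemma~1.2]{GS1}, to which we refer for the computations.
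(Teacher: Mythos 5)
The paper does not actually prove this proposition: it is quoted from \cite[Lemma 1.2]{GS1}, the only added content being the Remark that follows it, which uses the interior $C^{1,\beta}$ regularity of \cite{RS} to justify that weak solutions of \eqref{pro} satisfy the equation pointwise so that the cited Hopf lemma applies. Your proposal reconstructs exactly that structure — the same weak-to-pointwise reduction, a correct interior strong maximum principle in Step 1, and the $\delta^s$-barrier boundary estimate deferred to \cite{GS1} — so it matches the paper's treatment; the only caveat is that your Step 2 is a strategy outline rather than a self-contained argument, but the paper likewise leaves that entirely to the reference.
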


\begin{remark}
In its original version from \cite{GS1}, the above Hopf lemma requires that $u$ satisfies $\fl u=f(x,u)$ {\em pointwisely} in $\Omega$, while we deal with weak solutions. In fact, any weak solution $u$ of \eqref{pro} has a higher interior regularity than that displayed in Proposition \ref{reg}, as $u\in C^{1,\beta}(\Omega)$ for any $\beta\in(\max\{0,2s-1\},2s)$ (see \cite[Corollary 5.6]{RS}). Hence, also recalling that $u=0$ in $\Omega^c$, one can see that the limit in \eqref{fl} exists in $\R$ and the equation is satisfied pointwisely (see \cite[Proposition 2.12]{IMS1}).
\end{remark}

\noindent
Now we introduce an energy functional for problem \eqref{pro}. Set for all $(x,t)\in\Omega\times\R$
\[F(x,t)=\int_0^t f(x,\tau)\,d\tau,\]
and for all $u\in H^s_0(\Omega)$
\beq\label{phi}
\varphi(u)=\frac{\|u\|^2}{2}-\int_\Omega F(x,u)\,dx.
\eeq
By the continuous embedding $H^s_0(\Omega)\hookrightarrow L^p(\Omega)$ we have $\varphi\in C^1(H^s_0(\Omega))$, and for all $u,v\in H^s_0(\Omega)$
\[\varphi'(u)(v)=\iint_{\R^N\times\R^N}\frac{(u(x)-u(y))(v(x)-v(y))}{|x-y|^{N+2s}}\,dx\,dy-\int_\Omega f(x,u)v\,dx.\]
So, recalling Definition \ref{ws}, $u$ is a solution of \eqref{pro} iff $\varphi'(u)=0$ in $H^{-s}(\Omega)$. Among critical points of $\varphi$, {\em local minimizers} play a preeminent role. We recall, in this connection, a useful topological result relating such minimizers in the $H^s_0(\Omega)$-topology and in $C^0_\delta(\overline\Omega)$-topology, respectively (a fractional version of the classical result of \cite{BN}):

\begin{proposition}\label{lm}
{\rm \cite[Theorem 1.1]{IMS}} Let ${\bf H}_0$ hold, $\varphi$ be defined as above, and $u\in H^s_0(\Omega)$. Then, the following conditions are equivalent:
\begin{enumroman}
\item\label{lm1} there exists $r>0$ s.t. $\varphi(u+v)\ge\varphi(u)$ for all $v\in H^s_0(\Omega)$, $\|v\|\le r$;
\item\label{lm2} there exists $\rho>0$ s.t. $\varphi(u+v)\ge\varphi(u)$ for all $v\in H^s_0(\Omega)\cap C^0_\delta(\overline\Omega)$, $\|v\|_{0,\delta}\le\rho$.
\end{enumroman}
\end{proposition}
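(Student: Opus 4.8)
The plan is to follow the scheme of \cite{BN}, treating the two implications separately after a common reduction. In either case $u$ turns out to be a weak solution of \eqref{pro}: under \ref{lm1} this is immediate, since $u$ is then a local minimizer of the $C^1$ functional $\varphi$; under \ref{lm2} it follows by testing $\varphi'(u)$ against $C^\infty_c(\Omega)$, which is contained in $\s\cap C^0_\delta(\overline\Omega)$ and dense in $\s$. Hence $u\in L^\infty(\Omega)$ by Proposition \ref{apb}, and we may replace $f(x,t)$ by $g(x,t)=f(x,u(x)+t)-f(x,u(x))$, which still satisfies ${\bf H}_0$ (with a larger constant, since $\|u\|_\infty<\infty$). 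Writing $G(x,t)=\int_0^t g(x,\tau)\,d\tau$ and using that $\langle u,v\rangle=\int_\Omega f(x,u)v\,dx$ for all $v\in\s$, one computes that
\[\psi(v):=\varphi(u+v)-\varphi(u)=\frac{\|v\|^2}{2}-\int_\Omega G(x,v)\,dx,\]
whence $\psi\in C^1(\s)$, so it suffices to show that $0$ is a local minimizer of $\psi$ in the $\s$-topology if and only if it is one in the $C^0_\delta(\overline\Omega)$-topology.

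For the implication \ref{lm1}$\Rightarrow$\ref{lm2} I would argue by contradiction. If $v_n\in\s\cap C^0_\delta(\overline\Omega)$ satisfy $\|v_n\|_{0,\delta}\to0$ and $\psi(v_n)<0$, then $\|v_n\|_\infty\le\|\delta^s\|_\infty\,\|v_n\|_{0,\delta}\to0$, so by ${\bf H}_0$ and the boundedness of $\Omega$ we get $\int_\Omega G(x,v_n)\,dx\to0$; hence $0>\psi(v_n)=\tfrac12\|v_n\|^2-o(1)$ forces $\|v_n\|\to0$, contradicting \ref{lm1} for $n$ large.

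The core of the proof is \ref{lm2}$\Rightarrow$\ref{lm1}. Suppose $0$ is \emph{not} a local minimizer of $\psi$ in $\s$; then for every small $\eps>0$ the number $m_\eps=\inf\{\psi(v):\ v\in\s,\ \|v\|\le\eps\}$ is negative, and it is attained at some $v_\eps\ne0$ — here one uses that $\psi$ is sequentially weakly lower semicontinuous on $\s$ (the term $\int_\Omega G(x,v)\,dx$ being sequentially weakly continuous, by the compact embedding $\s\hookrightarrow L^p(\Omega)$ and ${\bf H}_0$), together with the weak compactness of closed balls of $\s$. If $\|v_\eps\|<\eps$ then $\psi'(v_\eps)=0$; if $\|v_\eps\|=\eps$, then — since $v_\eps$ minimizes $\psi$ on the sphere of radius $\eps$ and the directions $w\in\s$ with $\langle v_\eps,w\rangle<0$ point inside the ball — a Lagrange multiplier argument produces $\mu_\eps\le0$ with $(1-\mu_\eps)\langle v_\eps,w\rangle=\int_\Omega g(x,v_\eps)w\,dx$ for all $w\in\s$. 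In both cases $v_\eps$ is a weak solution of $\fl v_\eps=\theta_\eps\,g(x,v_\eps)$ with $\theta_\eps\in(0,1]$, and the decisive point is that the reaction $\theta_\eps\,g(x,\cdot)$ satisfies ${\bf H}_0$ with a constant \emph{independent of} $\eps$.

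It then remains to bootstrap and pass to the limit. Applying Proposition \ref{reg} to these equations — whose constants depend only on the ($\eps$-uniform) ${\bf H}_0$-data — yields $v_\eps\in C^\alpha_\delta(\overline\Omega)$ with $\|v_\eps\|_{\alpha,\delta}\le C(1+\|v_\eps\|_{2^*_s})\le C(1+C'\eps)$, so $\{v_\eps\}$ stays bounded in $C^\alpha_\delta(\overline\Omega)$ as $\eps\to0^+$. By the compact embedding $C^\alpha_\delta(\overline\Omega)\hookrightarrow C^0_\delta(\overline\Omega)$, along a sequence $\eps_n\downarrow0$ one has $v_{\eps_n}\to v_*$ in $C^0_\delta(\overline\Omega)$; since also $\|v_{\eps_n}\|\le\eps_n\to0$, both $v_{\eps_n}\to v_*$ and $v_{\eps_n}\to0$ in $L^2(\Omega)$, so $v_*=0$ and $\|v_{\eps_n}\|_{0,\delta}\to0$. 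For $n$ large this places $v_{\eps_n}$ in the $C^0_\delta(\overline\Omega)$-neighbourhood of $0$ on which $\psi\ge0$ by \ref{lm2}, contradicting $\psi(v_{\eps_n})=m_{\eps_n}<0$; hence \ref{lm1} holds. I expect the main obstacle to be precisely this uniform-in-$\eps$ control of the nonlinearity (hence of the regularity estimate from Proposition \ref{reg}), which is what makes it legitimate to invoke the $C^0_\delta$-minimality in the limit — the attainment of $m_\eps$ and the correct sign of the Lagrange multiplier being the other points that require some care.
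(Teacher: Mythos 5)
The paper does not prove this proposition but quotes it verbatim from \cite[Theorem 1.1]{IMS}, and your argument is a correct reconstruction of the proof given there: the same Brezis--Nirenberg scheme of shifting to $\psi(v)=\varphi(u+v)-\varphi(u)$, minimizing over small $H^s_0(\Omega)$-balls, extracting a constrained Euler--Lagrange equation with multiplier $\mu_\eps\le 0$ so that the reaction $\theta_\eps g$ satisfies ${\bf H}_0$ uniformly in $\eps$, and then using the uniform $C^\alpha_\delta(\overline\Omega)$ bounds and the compact embedding into $C^0_\delta(\overline\Omega)$ to contradict \ref{lm2}. You also correctly identify the genuinely delicate points (attainment of $m_\eps$, sign of the multiplier, $\eps$-uniformity of the regularity estimate) and the fact that, unlike in the local case, the implication \ref{lm1}$\Rightarrow$\ref{lm2} is not purely topological and needs the small extra argument you give.
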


\noindent
In the proof of our result we will need some spectral properties of $\fl$. Let us consider the following eigenvalue problem:
\beq\label{ep}
\begin{cases}
\fl u=\lambda u & \text{in $\Omega$} \\
u=0 & \text{in $\Omega^c$.}
\end{cases}
\eeq
Just as in the local case, we say that $\lambda>0$ is an eigenvalue of $\fl$ if problem \eqref{ep} has a nonzero solution $u\in H^s_0(\Omega)$, which is called a $\lambda$-eigenfunction. From the current literature we have rather complete information about the first two eigenvalues of $\fl$:

\begin{proposition}\label{ev}
The spectrum of $\fl$ consists of a nondecressing sequence $0<\lambda^1_s(\Omega)<\lambda^2_s(\Omega)\le \ldots$ of positive numbers, in particular:
\begin{enumroman}
\item\label{ev1}{\rm \cite[Proposition 9]{SV1}} $\lambda^1_s(\Omega)$ is simple and the unique $L^2(\Omega)$-normalized eigenfunction is $\hat u_1\in {\rm int}\,(C^0_\delta(\overline\Omega)_+)$ s.t. $\|\hat u_1\|_2=1$, moreover $\lambda^1_s(\Omega)$ admits the variational characterization
\[\lambda^1_s(\Omega)=\inf_{u\in H^s_0(\Omega)\setminus\{0\}}\frac{\|u\|^2}{\|u\|_2^2};\]
\item\label{ev2}{\rm \cite[Proposition 2.8]{GS}} $\lambda^2_s(\Omega)$ is the smallest eigenvalue in $(\lambda^1_s(\Omega),\infty)$, the $\lambda^2_s(\Omega)$-eigenfunctions are nodal, moreover $\lambda^2_s(\Omega)$ admits the variational characterization
\[\lambda^2_s(\Omega)=\inf_{\gamma\in\Gamma_1}\max_{t\in[0,1]}\|\gamma(t)\|^2,\]
where
\[\Gamma_1=\{\gamma\in C([0,1],H^s_0(\Omega)):\,\gamma(0)=\hat u_1,\,\gamma(1)=-\hat u_1,\,\|\gamma(t)\|_2=1 \ \text{\rm for all $t\in[0,1]$}\}.\]
\end{enumroman}
\end{proposition}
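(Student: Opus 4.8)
Since Proposition \ref{ev} collects facts already available in the literature, the plan is to indicate how each part follows from the spectral theorem combined with the tools recalled above, pointing to \cite{SV1,GS} for the technical details. The starting point is that $\fl$ with the exterior Dirichlet condition is the self-adjoint operator on $L^2(\Omega)$ generated by the symmetric, coercive bilinear form $(u,v)\mapsto\langle u,v\rangle$ on $\s$; since the embedding $\s\hookrightarrow L^2(\Omega)$ is compact, its resolvent is a compact self-adjoint operator, so the spectral theorem yields a divergent sequence $0<\lambda^1_s(\Omega)\le\lambda^2_s(\Omega)\le\ldots$ of eigenvalues with finite-dimensional eigenspaces and an $L^2$-orthonormal basis of eigenfunctions, while positivity of the eigenvalues and the Rayleigh quotient formula for $\lambda^1_s(\Omega)$ follow from coercivity of $\|\cdot\|^2$.

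For \ref{ev1} I would take a minimizing sequence for the Rayleigh quotient, extract a subsequence converging weakly in $\s$ and strongly in $L^2(\Omega)$, and obtain a minimizer $\hat u_1$, which is a $\lambda^1_s(\Omega)$-eigenfunction; replacing $\hat u_1$ by $|\hat u_1|$ and using \eqref{np} shows $\hat u_1$ may be chosen nonnegative, Proposition \ref{reg} gives $\hat u_1\in C^\alpha_\delta(\overline\Omega)$, and since $\lambda^1_s(\Omega)\hat u_1\ge 0$ the weak maximum principle (Proposition \ref{wmp}) together with the Hopf lemma (Proposition \ref{hl}) place $\hat u_1$ in the interior \eqref{cone}. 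Simplicity then follows because two $L^2$-orthogonal eigenfunctions cannot both be strictly positive on $\overline\Omega$ in the sense of \eqref{cone}, so the $\lambda^1_s(\Omega)$-eigenspace is one-dimensional. This reproduces \cite[Proposition 9]{SV1}.

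For \ref{ev2}, that $\lambda^2_s(\Omega)$ is the smallest eigenvalue exceeding $\lambda^1_s(\Omega)$ is built into the enumeration, and any $\lambda^2_s(\Omega)$-eigenfunction is $L^2$-orthogonal to the strictly positive $\hat u_1$, hence nodal. The substantive point — and the one I expect to be the main obstacle — is the minimax characterization over $\Gamma_1$, the fractional counterpart of the Cuesta--de Figueiredo--Gossez formula \cite{CFG}. Here one works on the $C^1$ Hilbert manifold $S=\{u\in\s:\|u\|_2=1\}$, on which the functional $u\mapsto\|u\|^2$ satisfies the Palais--Smale condition (again by compactness of $\s\hookrightarrow L^2(\Omega)$) and whose critical points lying below level $\lambda^2_s(\Omega)$ are precisely $\pm\hat u_1$; a deformation argument then shows that, for $\lambda<\lambda^2_s(\Omega)$, no path in $\Gamma_1$ can remain inside $\{u\in\s:\|u\|^2\le\lambda\}$, which gives $\inf_{\gamma\in\Gamma_1}\max_{t\in[0,1]}\|\gamma(t)\|^2\ge\lambda^2_s(\Omega)$, while the reverse inequality is obtained by constructing an optimal path through a second eigenfunction, using that there are no eigenvalues in the interval $(\lambda^1_s(\Omega),\lambda^2_s(\Omega))$. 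The full construction is carried out in \cite[Proposition 2.8]{GS}, to which we refer.
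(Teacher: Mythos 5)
Your proposal is correct and matches the paper's treatment: the paper gives no proof of Proposition \ref{ev}, stating it as a compendium of known results with references to \cite[Proposition 9]{SV1} and \cite[Proposition 2.8]{GS}, which is exactly what you do. Your sketch of the underlying arguments (compact self-adjoint resolvent and the spectral theorem, minimization of the Rayleigh quotient, sign and Hopf-type arguments for $\hat u_1$ with simplicity via orthogonality, and the Cuesta--de Figueiredo--Gossez minimax construction on the $L^2$-sphere for $\lambda^2_s(\Omega)$) is an accurate outline of the proofs carried out in those references.
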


\noindent
Note that \ref{ev2} above is a fractional version of a classical result of \cite{CFG}, and that Proposition \ref{ev} holds as well for $(-\Delta)_p^s$ (see \cite{BP,FP}). For further information about the spectra of $\fl$ and $(-\Delta)_p^s$ see also \cite{IS1,PSY,SV2}.

\subsection{Some recalls of critical point theory}\label{ss22}

Variational methods are based on abstract critical point theory, and the latter includes many results, depicting the rich topology that nonlinear and nonconvex functionals may exhibit. We recall here some well-known results which will be our major tools, mainly following \cite{MMP} (see also \cite{R}).
\vskip2pt
\noindent
Let $(X,\|\cdot\|)$ be a reflexive Banach space, $(X^*,\|\cdot\|_*)$ be its topological dual, and $\varphi\in C^1(X)$ be a functional. By $K(\varphi)$ we denote the set of all critical points of $\varphi$, i.e., those ponts $u\in X$ s.t. $\varphi'(u)=0$ in $X^*$, while for all $c\in\R$ we set
\[K_c(\varphi)=\{u\in K(\varphi):\,\varphi(u)=c\},\]
besides we set
\[\overline\varphi^c=\{u\in X:\,\varphi(u)\le c\}.\]
Most results require the following Cerami compactness condition (a weaker version of the Palais-Smale condition):
\begin{align}\tag{C}\label{c} 
\begin{split}
&\text{Any sequence $(u_n)$ in $X$, s.t. $(\varphi(u_n))$ is bounded in $\R$ and $(1+\|u_n\|)\varphi(u_n)\to 0$}\\
&\text{in $X^*$, admits a (strongly) convergent subsequence.}
\end{split}
\end{align}
We recall a version of the mountain pass theorem (see \cite{AR,PS} for the original result):

\begin{theorem}\label{mpt}
{\rm \cite[Theorem 5.40]{MMP}} Let $\varphi\in C^1(X)$ satisfy \eqref{c}, $u_0,u_1\in X$, $r\in(0,\|u_1-u_0\|)$ be s.t.
\[\max\{\varphi(u_0),\varphi(u_1)\}<\eta_r:=\inf_{\|u-u_0\|=r}\varphi(u),\]
moreover let
\[\Gamma=\{\gamma\in C([0,1],X): \ \gamma(0)=u_0,\,\gamma(1)=u_1\},\]
\[c=\inf_{\gamma\in\Gamma}\max_{t\in[0,1]}\varphi(\gamma(t)).\]
Then, $c\ge\eta_r$ and $K_c(\varphi)\neq\emptyset$.
\end{theorem}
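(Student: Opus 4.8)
The plan is to argue by contradiction via a deformation argument adapted to the Cerami condition. First I would establish the easy inequality $c\ge\eta_r$. Every $\gamma\in\Gamma$ joins $\gamma(0)=u_0$, for which $\|\gamma(0)-u_0\|=0<r$, to $\gamma(1)=u_1$, for which $\|\gamma(1)-u_0\|=\|u_1-u_0\|>r$; so by continuity of $t\mapsto\|\gamma(t)-u_0\|$ and the intermediate value theorem there is $t_\gamma\in(0,1)$ with $\|\gamma(t_\gamma)-u_0\|=r$, whence $\max_{t\in[0,1]}\varphi(\gamma(t))\ge\varphi(\gamma(t_\gamma))\ge\eta_r$. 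Taking the infimum over $\Gamma$ gives $c\ge\eta_r$, and in particular $c\ge\eta_r>\max\{\varphi(u_0),\varphi(u_1)\}$.

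For the core assertion, suppose $K_c(\varphi)=\emptyset$. The key tool is a quantitative deformation lemma valid under \eqref{c}: since $c$ is not a critical value and $\varphi$ satisfies the Cerami condition, there exist $\eps>0$ and a continuous map $h:[0,1]\times X\to X$ with $h(0,\cdot)=\mathrm{id}_X$, $h(1,\overline\varphi^{c+\eps})\subseteq\overline\varphi^{c-\eps}$, and $h(\tau,u)=u$ whenever $\varphi(u)\notin[c-\eps,c+\eps]$. One obtains $h$ by integrating a locally Lipschitz pseudo-gradient vector field for $\varphi$, truncated and renormalized with the weight $(1+\|u\|)$; this renormalization is precisely what compensates for the weaker compactness built into the Cerami condition, and I expect this step to be the main obstacle, although it is by now entirely standard (see \cite{MMP}). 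Shrinking $\eps$ if necessary, we may also assume $c-\eps>\max\{\varphi(u_0),\varphi(u_1)\}$, which is possible by the strict inequality proved above.

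Finally I would derive the contradiction. By definition of $c$ as an infimum, choose $\gamma\in\Gamma$ with $\max_{t\in[0,1]}\varphi(\gamma(t))<c+\eps$, i.e.\ $\gamma([0,1])\subseteq\overline\varphi^{c+\eps}$, and set $\tilde\gamma(t)=h(1,\gamma(t))$, which is continuous. Since $\varphi(u_0),\varphi(u_1)<c-\eps$, the endpoints $u_0=\gamma(0)$ and $u_1=\gamma(1)$ lie outside the strip $\varphi^{-1}([c-\eps,c+\eps])$, so $h(1,u_0)=u_0$ and $h(1,u_1)=u_1$, i.e.\ $\tilde\gamma\in\Gamma$. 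On the other hand $\tilde\gamma([0,1])\subseteq h(1,\overline\varphi^{c+\eps})\subseteq\overline\varphi^{c-\eps}$, hence $\max_{t\in[0,1]}\varphi(\tilde\gamma(t))\le c-\eps<c$, contradicting the definition of $c$. Therefore $K_c(\varphi)\neq\emptyset$, which together with $c\ge\eta_r$ completes the proof.
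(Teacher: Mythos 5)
The paper states Theorem \ref{mpt} as a quoted result from \cite[Theorem 5.40]{MMP} and gives no proof of its own, so there is nothing internal to compare against; your argument is the standard deformation proof underlying that reference, and it is correct. The only step worth flagging is the existence of the deformation when $K_c(\varphi)=\emptyset$: this is legitimate because condition \eqref{c} then forces a uniform lower bound on $(1+\|u\|)\|\varphi'(u)\|_*$ over a strip $\varphi^{-1}([c-2\eps,c+2\eps])$ (otherwise a Cerami sequence at level $c$ would produce a point of $K_c(\varphi)$), and this bound is precisely what allows the $(1+\|u\|)$-weighted pseudo-gradient flow to carry $\overline\varphi^{c+\eps}$ into $\overline\varphi^{c-\eps}$ while fixing the endpoints.
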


\noindent
We will also use the second deformation theorem:

\begin{theorem}\label{sdt}
{\rm \cite[Theorem 5.34]{MMP}} Let $\varphi\in C^1(X)$ satisfy \eqref{c}, $a<b$ be real numbers s.t. $K_c(\varphi)=\emptyset$ for all $c\in(a,b)$ and $K_a(\varphi)$ is a finite set. Then, there exists a continuous deformation $h:[0,1]\times(\overline\varphi^b\setminus K_b(\varphi))\to(\overline\varphi^b\setminus K_b(\varphi))$ s.t.
\begin{enumroman}
\item\label{sdt1} $h(0,u)=u$, $h(1,u)\in\overline\varphi^a$ for all $u\in(\overline\varphi^b\setminus K_b(\varphi))$;
\item\label{sdt2} $h(t,u)=u$ for all $(t,u)\in[0,1]\times\overline\varphi^a$;
\item\label{sdt3} $t\mapsto\varphi(h(t,u))$ is decreasing in $[0,1]$ for all $u\in(\overline\varphi^b\setminus K_b(\varphi))$.
\end{enumroman}
\end{theorem}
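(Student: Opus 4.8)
The plan is to realize $h$ as a time-reparametrized negative pseudo-gradient flow for $\varphi$. On the open set $X\setminus K(\varphi)$ there exists a locally Lipschitz pseudo-gradient vector field $v$, i.e.\ one with $\|v(u)\|\le 2\|\varphi'(u)\|_*$ and $\langle\varphi'(u),v(u)\rangle\ge\|\varphi'(u)\|_*^2$. From it I would build a locally Lipschitz field $W$ on $X\setminus K(\varphi)$ rescaled so that $\|W(u)\|\le 1+\|u\|$ while $\langle\varphi'(u),W(u)\rangle$ stays comparable to $(1+\|u\|)\|\varphi'(u)\|_*$; the weight $1+\|u\|$ is dictated by \eqref{c}, which is exactly what makes $(1+\|u\|)\|\varphi'(u)\|_*$ bounded below on every strip $\{a+\eps\le\varphi\le b-\delta\}$ (no critical point lies in such a strip, as there are no critical values in $(a,b)$). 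If convenient one further normalizes $W$ so that $\varphi$ decreases at unit rate along the flow.

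Next I would integrate $\dot\eta=-W(\eta)$, $\eta(0,u)=u$, and check that $\eta$ is a continuous flow on $[0,\infty)\times(X\setminus K(\varphi))$: the bound $\|W(u)\|\le 1+\|u\|$ rules out finite-time blow-up by Gronwall, the orbit cannot reach $K(\varphi)$ while $\varphi\in(a,b)$ (no critical values there), and it cannot reach $K_b(\varphi)$ either (else $\varphi$ would be constant $=b$, forcing $u\in K_b(\varphi)$). Along each orbit $t\mapsto\varphi(\eta(t,u))$ is strictly decreasing, so if $\varphi(u)=b$ and $u\notin K_b(\varphi)$ the value drops below $b$ at once and never returns (trivially so when $\varphi(u)<b$), whence the orbit stays inside $\overline\varphi^b\setminus K_b(\varphi)$. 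Setting $T(u)=\inf\{t\ge 0:\varphi(\eta(t,u))\le a\}$, with $T(u)=0$ when $\varphi(u)\le a$, the strip estimate forces a uniform rate of decrease as long as the orbit lies in $\{a+\eps\le\varphi\le b-\delta\}$; letting $\eps,\delta\downarrow 0$ one concludes that either $T(u)<\infty$, in which case $\varphi(\eta(T(u),u))=a$, or $T(u)=\infty$ and $\varphi(\eta(t,u))\downarrow a$ (the limit cannot lie in $(a,b)$ by the uniform decrease, nor equal $b$ since $u\notin K_b(\varphi)$). In the second case $\int_0^\infty\langle\varphi'(\eta),W(\eta)\rangle\,dt=\varphi(u)-a<\infty$ yields, through the comparison with $(1+\|u\|)\|\varphi'\|_*$, a sequence $t_n\to\infty$ with $(1+\|\eta(t_n,u)\|)\|\varphi'(\eta(t_n,u))\|_*\to 0$ and $\varphi(\eta(t_n,u))\to a$; by \eqref{c} a subsequence converges to some $p\in K_a(\varphi)$, and since $K_a(\varphi)$ is finite, $p$ is an isolated critical point, so a trapping argument near $p$ (escaping a small ball around $p$ forces a decrease of $\varphi$ exceeding the amount still available) shows the whole orbit converges to $p\in\overline\varphi^a$.

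Finally I would set $h(\tau,u)=\eta(\tfrac{\tau T(u)}{1-\tau},u)$ for $\tau\in[0,1)$ when $T(u)<\infty$, $h(\tau,u)=\eta(\tfrac{\tau}{1-\tau},u)$ for $\tau\in[0,1)$ when $T(u)=\infty$, and $h(1,u)=\lim_{\tau\to 1^-}h(\tau,u)$, the limit existing by the previous step and lying in $\overline\varphi^a$. Then $h(0,u)=u$ and $h(1,u)\in\overline\varphi^a$, which is \ref{sdt1}; for $u\in\overline\varphi^a$ one has $T(u)=0$, hence $h(\tau,u)=u$ for all $\tau$, which is \ref{sdt2}; and $\tau\mapsto\varphi(h(\tau,u))$ is non-increasing because $\varphi$ is non-increasing along $\eta$ and the reparametrization is monotone, which is \ref{sdt3}. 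That $h$ takes values in $\overline\varphi^b\setminus K_b(\varphi)$ follows from $\varphi(h(\tau,u))\le\varphi(u)\le b$ together with the fact that each value $h(\tau,u)$ is either $u\notin K_b(\varphi)$, or a point of an orbit lying in $X\setminus K(\varphi)$, or a point of $K_a(\varphi)\subseteq\overline\varphi^a$. Continuity of $h$ on $[0,1)$ reduces to continuity of the flow and of $(\tau,u)\mapsto\tau T(u)/(1-\tau)$; continuity up to $\tau=1$ follows from the convergence statement above, and is made transparent if one has normalized $W$ so that $\varphi$ decreases at unit speed, for then $T(u)=\max\{\varphi(u)-a,0\}$ is automatically finite and continuous.

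I expect the main obstacle to be the behaviour of the flow near the two exceptional levels $a$ and $b$: deducing from \eqref{c} the uniform descent rate away from $K_b(\varphi)$ (equivalently, the completeness of the flow), and — for orbits that only reach level $a$ in infinite time — upgrading the Cerami-type information ``$\varphi\to a$ along a subsequence'' to genuine convergence of the orbit to a single point of $K_a(\varphi)$. It is precisely in this last step that the finiteness of $K_a(\varphi)$ is used, through the trapping argument around an isolated critical point; and it is precisely there, together with the continuity of $h$ at $\tau=1$, that the argument is genuinely delicate rather than routine.
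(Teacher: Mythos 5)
You should first note that the paper itself contains no proof of Theorem \ref{sdt}: it is quoted verbatim from \cite[Theorem 5.34]{MMP}, so there is no internal argument to compare against. Your sketch follows the standard pseudo-gradient-flow proof of the second deformation theorem, with the correct adaptation to the Cerami condition (the weight $1+\|u\|$ on the vector field, and the compactness argument giving a uniform descent rate on strips free of critical points) and a correct identification of where the finiteness of $K_a(\varphi)$ enters (trapping near an isolated critical point at level $a$). The overall architecture is right.

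Two concrete points need repair, and the one gap you admit is where essentially all the work lies. First, the reparametrization $h(\tau,u)=\eta(\tau T(u)/(1-\tau),u)$ for $T(u)<\infty$ is wrong as written: at $\tau=1/2$ it already reaches $\eta(T(u),u)$, and for $\tau>1/2$ it runs the flow past level $a$, where you have no control (all your descent and non-blow-up estimates live in the strip $a\le\varphi\le b$, so the orbit below level $a$ may fail to converge as $\tau\to 1^-$). You must either stop the orbit at time $T(u)$, e.g.\ take $h(\tau,u)=\eta(\tau T(u),u)$, or, as in the textbook proofs, multiply the field by a locally Lipschitz cutoff vanishing on $\overline\varphi^a$ so that the flow is stationary there; the latter also makes \ref{sdt2} automatic. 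Second, the claim that normalizing $W$ to unit descent speed makes $T(u)=\max\{\varphi(u)-a,0\}$ ``automatically finite and continuous'' and renders continuity at $\tau=1$ transparent does not hold up: that normalization divides by $\langle\varphi'(\eta),W(\eta)\rangle$, which tends to $0$ as the orbit approaches $K_a(\varphi)$, so the renormalized field is unbounded near level $a$, the bound $\|W(u)\|\le 1+\|u\|$ is lost, and the existence of $\lim_{t\to T(u)^-}\eta(t,u)$ together with its continuous dependence on $u$ is exactly the same difficulty as before, merely compressed into finite time. Finally, the joint continuity of $h$ at $\tau=1$ and at the points of $K_a(\varphi)$ --- showing that orbits starting near an orbit which limits onto $p\in K_a(\varphi)$ are themselves trapped near $p$, uniformly --- is asserted via ``a trapping argument'' but not carried out, and in \cite{MMP} this step occupies most of the proof. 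As it stands the proposal is a faithful roadmap of the standard argument rather than a complete proof; with the reparametrization fixed and the trapping/continuity step written out it would become one.
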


\noindent
In particular, \ref{sdt1} - \ref{sdt2} above mean that $\overline\varphi^a$ is a {\em strong deformation retract} of $\overline\varphi^b$ (see \cite[Definition 5.33 (b)]{MMP}).
\vskip2pt
\noindent
We conclude this section by recalling some basic notions from Morse theory (see \cite{BSW,C} for details). Let $\varphi\in C^1(X)$ satisfy \eqref{c} and $u\in K_c(\varphi)$ ($c\in\R$) be an {\em isolated} critical point of $\varphi$, i.e., there exists a neighborhood $U\subset X$ of $u$ s.t. $K(\varphi)\cap U=\{u\}$. Then, for all integer $k\ge 0$ the $k$-{\em th critical group of $\varphi$ at $u$} is defined as
\beq\label{cg}
C_k(\varphi,u)=H_k(\overline\varphi^c\cap U,\overline\varphi^c\cap U\setminus\{u\}),
\eeq
where $H_k(\cdot,\cdot)$ is the $k$-th (singular) homology group of a topological pair (see \cite[Definition 6.9]{MMP}). All these groups are real linear spaces. Note that, by the excision property of homology groups, \eqref{cg} is invariant with respect to $U$. In particular, if $u\in K(\varphi)$ is a strict local minimizer and an isolated critical point, then for all $k\ge 0$ we have
\beq\label{lmcg}
C_k(\varphi,u)=\delta_{k,0}\R,
\eeq
where $\delta_{k,h}$ is the Kronecker symbol (see \cite[Example 6.45(a)]{MMP}). Critical groups describe the homology of sublevel sets:

\begin{proposition}\label{subs}
{\rm \cite[Lemma 6.55]{MMP}} Let $\varphi\in C^1(X)$ satisfy \eqref{c}, $a<c<b$ be real numbers s.t.\ $c$ is the only critical value of $\varphi$ in $[a,b]$ and $K_c(\varphi)$ is a finite set. Then for all $k\in\N$
\[H_k(\overline\varphi^b,\overline\varphi^a)=\bigoplus_{u\in K_c(\varphi)}C_k(\varphi,u).\]
\end{proposition}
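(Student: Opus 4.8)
The plan is to collapse the pair $(\overline\varphi^b,\overline\varphi^a)$ onto a thin slab $\{c-\eps\le\varphi\le c+\eps\}$ around the unique critical level $c$, and then to split that slab into independent contributions localized at the finitely many points of $K_c(\varphi)$, each of which is by definition the corresponding critical group. For the first reduction, fix $\eps>0$ with $a<c-\eps<c<c+\eps<b$; since $c$ is the only critical value in $[a,b]$, the numbers $c-\eps$, $c+\eps$, $a$, $b$ are all regular values. Applying Theorem \ref{sdt} on the interval $[c+\eps,b]$ (where $K_{c'}(\varphi)=\emptyset$ for $c'\in(c+\eps,b)$ and $K_{c+\eps}(\varphi)=\emptyset$) shows that $\overline\varphi^{c+\eps}$ is a strong deformation retract of $\overline\varphi^b$, and applying it on $[a,c-\eps]$ shows that $\overline\varphi^a$ is a strong deformation retract of $\overline\varphi^{c-\eps}$. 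Feeding these two facts into the long exact homology sequences of the triples $(\overline\varphi^b,\overline\varphi^{c+\eps},\overline\varphi^{c-\eps})$ and $(\overline\varphi^{c+\eps},\overline\varphi^{c-\eps},\overline\varphi^a)$, in which the relative groups of a deformation-retract pair vanish, yields $H_k(\overline\varphi^b,\overline\varphi^a)\cong H_k(\overline\varphi^{c+\eps},\overline\varphi^{c-\eps})$ for every $k$.

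\textbf{The crux.} Write $K_c(\varphi)=\{u_1,\dots,u_m\}$ and choose pairwise disjoint open isolating neighborhoods $U_i\ni u_i$ (so that $K(\varphi)\cap\overline{U_i}=\{u_i\}$), with $U=\bigcup_iU_i$. The heart of the argument is to construct a deformation of $\overline\varphi^{c+\eps}$ onto $\overline\varphi^{c-\eps}\cup(U\cap\overline\varphi^{c+\eps})$ which is the identity on $\overline\varphi^{c-\eps}$ and which leaves that target set invariant; this will give $H_k(\overline\varphi^{c+\eps},\overline\varphi^{c-\eps})\cong H_k(\overline\varphi^{c-\eps}\cup(U\cap\overline\varphi^{c+\eps}),\overline\varphi^{c-\eps})$. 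One builds it by integrating a suitably truncated locally Lipschitz pseudo-gradient vector field for $\varphi$: the truncation freezes the flow on $\overline\varphi^{c-\eps}$ and keeps trajectories from exiting the $U_i$, while on the ``neck'' $\overline\varphi^{c+\eps}\setminus(U\cup\overline\varphi^{c-\eps})$, which contains no critical point, a contradiction argument based on the Cerami condition \eqref{c} yields a uniform lower bound for $(1+\|u\|)\|\varphi'(u)\|_*$, hence a uniform descent rate, so that every such trajectory reaches the level $c-\eps$ in uniformly bounded time. This step is where the genuine difficulty lies: in the infinite-dimensional reflexive space $X$ there is no compactness of pseudo-gradient trajectories, and it is precisely \eqref{c} that rules out ``asymptotic critical points at level $c$'' on the neck and thereby guarantees the finite, locally bounded flow time needed to pull $\overline\varphi^{c+\eps}$ down onto $\overline\varphi^{c-\eps}\cup U$; arranging the truncations so that the flow is simultaneously stationary on $\overline\varphi^{c-\eps}$ and never leaves the prescribed neighborhoods is the remaining delicate bookkeeping.

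\textbf{Splitting and recognition.} Next, excising $\overline\varphi^{c-\eps}\setminus U$ (enlarging the $U_i$ slightly if needed so the point-set hypothesis of excision holds) gives $H_k(\overline\varphi^{c-\eps}\cup(U\cap\overline\varphi^{c+\eps}),\overline\varphi^{c-\eps})\cong H_k(U\cap\overline\varphi^{c+\eps},U\cap\overline\varphi^{c-\eps})$, and since $U$ is the disjoint union of the open sets $U_i$ this splits as $\bigoplus_iH_k(U_i\cap\overline\varphi^{c+\eps},U_i\cap\overline\varphi^{c-\eps})$. Finally, for each $i$ one invokes the standard local fact that, $u_i$ being an isolated critical point at the isolated critical value $c$, the pair $(U_i\cap\overline\varphi^{c+\eps},U_i\cap\overline\varphi^{c-\eps})$ has the same homology as $(\overline\varphi^c\cap U_i,(\overline\varphi^c\cap U_i)\setminus\{u_i\})$, most cleanly seen by replacing $U_i$ with a Gromoll--Meyer neighborhood of $u_i$; by \eqref{cg} this last group is $C_k(\varphi,u_i)$. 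Concatenating the isomorphisms obtained in the three steps yields $H_k(\overline\varphi^b,\overline\varphi^a)=\bigoplus_{u\in K_c(\varphi)}C_k(\varphi,u)$, as claimed.
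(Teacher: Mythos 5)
The paper gives no proof of Proposition \ref{subs}: it is quoted directly from \cite[Lemma 6.55]{MMP}, so there is no in-paper argument to measure yours against. What you have written is the standard proof of this localization result (essentially the one in \cite{MMP} and in Chang's book \cite{C}): collapse $(\overline\varphi^b,\overline\varphi^a)$ to the slab $(\overline\varphi^{c+\eps},\overline\varphi^{c-\eps})$ via Theorem \ref{sdt} and the exact sequences of triples, deform the slab onto $\overline\varphi^{c-\eps}$ united with isolating neighborhoods of the finitely many critical points by a truncated pseudo-gradient flow, split by excision and disjointness, and recognize each summand as a critical group via \eqref{cg}. The outline is correct, and you rightly locate the crux in the deformation over the noncritical ``neck''. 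Two points deserve care in a full write-up. First, under the Cerami condition \eqref{c} the uniform lower bound obtained by your contradiction argument is on $(1+\|u\|)\|\varphi'(u)\|_*$, not on $\|\varphi'(u)\|_*$ alone; to get trajectories that reach level $c-\eps$ in uniformly bounded time one must therefore rescale the pseudo-gradient field by a factor comparable to $1+\|u\|$, which is exactly how \cite{MMP} arrange the flow. Second, the final identification $H_k(U_i\cap\overline\varphi^{c+\eps},U_i\cap\overline\varphi^{c-\eps})\cong C_k(\varphi,u_i)$ is itself a nontrivial deformation lemma rather than a formality, and invoking a Gromoll--Meyer pair is the right way to discharge it, but it should be cited as such rather than labeled a ``standard local fact''. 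Neither point is an error; both are simply where the real burden of proof sits in the reference you are reproducing.
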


\noindent
Now assume that
\[\inf_{u\in K(\varphi)}\varphi(u)=:\bar c>-\infty.\]
Then we can as well define the $k$-{\em th critical group of $\varphi$ at infinity} as
\beq\label{cginf}
C_k(\varphi,\infty)=H_k(X,\overline\varphi^c),
\eeq
with $c<\bar c$ (this definition also is invariant with respect to $c$). Critical groups at critical points and at infinity are related by the Poincar\'e-Hopf formula (one of the Morse relations):

\begin{theorem}\label{phf}
{\rm \cite[Remark 6.58]{MMP}} Let $\varphi\in C^1(X)$ satisfy \eqref{c}, $a<b$ be real numbers s.t.\ the set
\[K_a^b(\varphi)=\{u\in K(\varphi): \ a\le\varphi(u)\le b\}\]
is finite. Then,
\[\sum_{k=0}^\infty\sum_{u\in K_a^b(\varphi)}(-1)^k{\rm dim}\,(C_k(\varphi,u))=\sum_{k=0}^\infty(-1)^k{\rm dim}\,(C_k(\varphi,\infty)).\]
\end{theorem}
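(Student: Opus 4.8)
The plan is to reduce Theorem~\ref{phf} to its local counterpart, Proposition~\ref{subs}, by slicing $X$ along regular values of $\varphi$ and adding up Euler characteristics. Note first that, for $C_k(\varphi,\infty)$ to be defined via \eqref{cginf} one needs $\bar c>-\infty$, and for the asserted identity to be consistent (its right-hand side does not depend on $a,b$) one must in fact have $K_a^b(\varphi)=K(\varphi)$; accordingly I will assume that $\varphi$ is bounded on $K(\varphi)$, with $a<\bar c$ and $b>\sup_{K(\varphi)}\varphi$, and that all critical groups occurring are finite-dimensional and vanish for $k$ large, so that the alternating sums are genuine finite sums. Since $K(\varphi)$ is finite, $\varphi$ has only finitely many critical values $c_1<\dots<c_m$ in $[a,b]$; I would fix regular values $a=b_0<c_1<b_1<c_2<\dots<c_m<b_m=b'$ interlacing them, with $b'$ above every critical value. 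For a topological pair $(A,B)$ write $\chi(A,B)=\sum_{k=0}^{\infty}(-1)^k\dim H_k(A,B)$.

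For the ``infinity'' side, I would use that $[b',+\infty)$ contains no critical value and that \eqref{c} holds to conclude, by the standard deformation argument underlying Theorem~\ref{sdt} applied on the unbounded interval $[b',+\infty)$, that $\overline\varphi^{b'}$ is a strong deformation retract of $X$, hence $H_k(X,\overline\varphi^{b'})=0$ for all $k$. Feeding this into the long exact homology sequence of the triple $(X,\overline\varphi^{b'},\overline\varphi^{b_0})$, and using that $b_0=a<\bar c$ is admissible in \eqref{cginf}, I obtain $C_k(\varphi,\infty)=H_k(X,\overline\varphi^{b_0})\cong H_k(\overline\varphi^{b'},\overline\varphi^{b_0})$ for every $k$, and in particular $\sum_k(-1)^k\dim C_k(\varphi,\infty)=\chi(\overline\varphi^{b'},\overline\varphi^{b_0})$.

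For the ``critical point'' side, Proposition~\ref{subs} applied on each interval $[b_{i-1},b_i]$ (whose only critical value is $c_i$, with $K_{c_i}(\varphi)$ finite) gives $H_k(\overline\varphi^{b_i},\overline\varphi^{b_{i-1}})=\bigoplus_{u\in K_{c_i}(\varphi)}C_k(\varphi,u)$, whence $\chi(\overline\varphi^{b_i},\overline\varphi^{b_{i-1}})=\sum_k(-1)^k\sum_{u\in K_{c_i}(\varphi)}\dim C_k(\varphi,u)$. The remaining ingredient is the additivity of the Euler characteristic along the filtration $\overline\varphi^{b_0}\subseteq\overline\varphi^{b_1}\subseteq\dots\subseteq\overline\varphi^{b_m}$: the long exact sequence of a triple $(A,B,C)$ yields $\chi(A,C)=\chi(A,B)+\chi(B,C)$ whenever the groups involved are finite-dimensional, and iterating gives $\chi(\overline\varphi^{b_m},\overline\varphi^{b_0})=\sum_{i=1}^m\chi(\overline\varphi^{b_i},\overline\varphi^{b_{i-1}})$. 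Combining the last three displays,
\[
\sum_{k}(-1)^k\dim C_k(\varphi,\infty)=\chi\big(\overline\varphi^{b_m},\overline\varphi^{b_0}\big)=\sum_{i=1}^{m}\sum_{k}(-1)^k\!\!\sum_{u\in K_{c_i}(\varphi)}\!\!\dim C_k(\varphi,u)=\sum_{k}(-1)^k\!\!\sum_{u\in K_a^b(\varphi)}\!\!\dim C_k(\varphi,u),
\]
which is the claim.

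The genuinely delicate point is not the homological algebra but the finiteness bookkeeping: one must know beforehand that every $C_k(\varphi,u)$ (equivalently every $H_k(\overline\varphi^{b_i},\overline\varphi^{b_{i-1}})$) is finite-dimensional and nonzero for only finitely many $k$, since otherwise neither the alternating sums nor the additivity of $\chi$ make literal sense. In the abstract statement this has to be built into the hypotheses (or the identity read as an equality of Poincar\'e-type generating functions); in the applications of this paper it will instead follow from the concrete structure of the energy functional $\varphi$ on $\s$, a quadratic form perturbed by a lower-order (subcritical) term, which forces $\varphi''$ at each critical point to be of the form identity minus compact, hence the critical groups to be finitely generated and concentrated in finitely many degrees. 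A secondary, purely technical, point is the strong deformation retraction of $X$ onto a high sublevel set used above: it requires the version of the deformation lemma valid on an unbounded interval of regular values under \eqref{c}, which is slightly more than the literal content of Theorem~\ref{sdt} but is entirely standard.
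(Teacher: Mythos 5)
The paper does not actually prove this statement --- it is quoted directly from \cite[Remark 6.58]{MMP} --- and your argument is precisely the standard derivation of the Morse relations given in such references: interlace the critical values with regular levels, compute each layer $H_k(\overline\varphi^{b_i},\overline\varphi^{b_{i-1}})$ by Proposition \ref{subs}, telescope the Euler characteristic along the filtration via the long exact sequences of triples, and identify the top of the filtration with $X$ by deforming over the critical-point-free interval $[b',\infty)$ under \eqref{c}. The two caveats you raise are genuine but correctly handled: the identity only makes sense when $K_a^b(\varphi)=K(\varphi)$ with all critical groups finite-dimensional and eventually zero (this is implicit in the cited formulation), and in this paper's applications both conditions are verified directly, since every critical group computed there equals $0$, $\delta_{k,0}\R$, or $\delta_{k,1}\R$.
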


\subsection*{Notation}\label{ss23}

Throughout the paper, $B_r(x)$ will denote the open ball of radius $r>0$ centered at $x\in\R^N$, and $C>0$ will be a constant whose value may change from line to line.

\section{The sublinear case}\label{sec3}

\noindent
In this section we prove the existence of three non-zero solutions of problem \eqref{pro} when $f(x,\cdot)$ is sublinear at infinity, by means of the second deformation theorem and spectral theory. Precisely, we make on the nonlinearity $f$ the following assumtpions:
\begin{itemize}[leftmargin=1cm]
\item[$\h$] $f:\Omega\times\R\to\R$ is a Carath\'eodory mapping, satisfying
\begin{enumroman}
\item\label{h11} $|f(x,t)|\le a_0(1+|t|^{p-1})$ for a.e. $x\in\Omega$ and all $t\in\R$ ($a_0>0$, $p\in(2,2^*_s)$);
\item\label{h12} $f(x,t)t\ge 0$ for a.e. $x\in\Omega$ and all $t\in\R$;
\item\label{h13} $\displaystyle\limsup_{|t|\to\infty}\frac{F(x,t)}{t^2}\leq 0$ uniformly for a.e. $x\in\Omega$;
\item\label{h14} $\displaystyle\liminf_{t\to 0}\frac{F(x,t)}{t^2}\geq\beta$ uniformly for a.e. $x\in\Omega$ ($\beta>0$).
\end{enumroman}
\end{itemize}

\begin{example}
Let $a\in L^\infty(\Omega)$ be a function s.t. $a(x)\ge\beta>0$ for a.e. $x\in\Omega$, and set for all $(x,t)\in\Omega\times\R$
\[f(x,t)=a(x)\,{\rm sign}(t)\ln(1+|t|).\]
Then, $f$ satisfies hypotheses $\h$.
\end{example}

\noindent
Clearly, by hypothesis $\h$ \ref{h12} problem \eqref{pro} always has the zero solution. First we prove that, for $\beta>0$ big enough, \eqref{pro} has two constant sign solutions:

\begin{proposition}\label{sub2}
Let $\h$ hold with $\beta>\frac{\lambda_s^1(\Omega)}{2}$. Then \eqref{pro} admits at least two non-zero solutions $u_\pm\in\pm{\rm int}\,(C^0_\delta(\overline\Omega)_+)$.
\end{proposition}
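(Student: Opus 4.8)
The plan is to obtain the two constant-sign solutions by a standard truncation-plus-minimization argument, exploiting that $f$ is sublinear at infinity (so the truncated functional is coercive) and superquadratic at zero in the sense of $\h$ \ref{h14} (so the origin is not a minimum). I would carry out the construction for the positive solution $u_+$; the negative one $u_-$ follows by the obvious symmetric argument (replacing $f(x,t)$ by $-f(x,-t)$, or working with $f(x,-t^-)$).

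First I would introduce the positive truncation $f_+(x,t)=f(x,t^+)$ with primitive $F_+(x,t)=\int_0^t f_+(x,\tau)\,d\tau$, and the associated functional $\varphi_+(u)=\frac{\|u\|^2}{2}-\int_\Omega F_+(x,u)\,dx$, which is $C^1$ on $\s$ by $\h$ \ref{h11} and the embedding $\s\hookrightarrow L^p(\Omega)$. The key coercivity step: by $\h$ \ref{h13}, for any $\eps>0$ there is $C_\eps>0$ with $F(x,t)\le\eps t^2+C_\eps$ for a.e. $x$ and all $t\in\R$, hence $F_+(x,u)\le\eps u^2+C_\eps$; choosing $\eps<\frac{\lambda^1_s(\Omega)}{2}$ and using the variational characterization $\|u\|_2^2\le\lambda^1_s(\Omega)^{-1}\|u\|^2$ from Proposition \ref{ev} \ref{ev1}, we get $\varphi_+(u)\ge\big(\frac12-\frac{\eps}{\lambda^1_s(\Omega)}\big)\|u\|^2-C_\eps|\Omega|\to\infty$ as $\|u\|\to\infty$. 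Since $\varphi_+$ is also sequentially weakly lower semicontinuous (the norm term is w.l.s.c.\ and the superposition term is weakly continuous by the compact embedding $\s\hookrightarrow L^p(\Omega)$), it attains a global minimum at some $u_+\in\s$.

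Next I would show $\varphi_+(u_+)<0=\varphi_+(0)$, so that $u_+\neq 0$: take the first eigenfunction $\hat u_1\in\mathrm{int}\,(C^0_\delta(\overline\Omega)_+)$ and compute $\varphi_+(t\hat u_1)$ for small $t>0$. By $\h$ \ref{h14}, for $\eps>0$ there is $\delta>0$ with $F(x,\tau)\ge(\beta-\eps)\tau^2$ for $|\tau|\le\delta$; since $\hat u_1\ge 0$, for $t$ small $F_+(x,t\hat u_1)=F(x,t\hat u_1)\ge(\beta-\eps)t^2\hat u_1^2$ a.e., hence $\varphi_+(t\hat u_1)\le\frac{t^2}{2}\|\hat u_1\|^2-(\beta-\eps)t^2\|\hat u_1\|_2^2=t^2\big(\frac{\lambda^1_s(\Omega)}{2}-\beta+\eps\big)$, which is negative for $\eps$ small because $\beta>\frac{\lambda^1_s(\Omega)}{2}$. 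Thus $u_+$ is a nontrivial global minimizer of $\varphi_+$, so $\varphi_+'(u_+)=0$. Testing this identity with $v=u_+^-\in\s$, and using the nonlocal inequality \eqref{np} together with $f_+(x,u_+)u_+^-=f(x,u_+^+)u_+^-=0$ a.e., gives $-\|u_+^-\|^2\ge\|u_+^-\|^2$... more precisely $\varphi_+'(u_+)(-u_+^-)$ combined with \eqref{np} forces $\|u_+^-\|^2\le 0$, hence $u_+^-=0$ and $u_+\ge 0$ a.e.; consequently $f_+(x,u_+)=f(x,u_+)$ and $u_+$ is a genuine nonnegative weak solution of \eqref{pro}.

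Finally I would upgrade the regularity and sign: $\h$ \ref{h11} is exactly ${\bf H}_0$, so Proposition \ref{reg} gives $u_+\in C^\alpha_\delta(\overline\Omega)$, and Proposition \ref{wmp} (applied with the nonnegative nonlinearity, or directly) gives lower semicontinuity and $u_+\ge 0$ pointwise; then, since by $\h$ \ref{h12} we have $f(x,t)\ge 0\ge -ct$ for $t\ge 0$ and $f(x,t)=-f(x,t)\cdot(-1)$... it suffices that $f(x,t)\ge -ct$ for all $t$, which follows from $\h$ \ref{h11}: $|f(x,t)|\le a_0(1+|t|^{p-1})$ does not directly give a linear lower bound, so instead I would invoke $\h$ \ref{h12} to note $f(x,t)\ge 0$ for $t\ge 0$ while the Hopf lemma Proposition \ref{hl} only needs $f(x,t)\ge -ct$ near the relevant values; since $u_+\ge0$ we only use $t\ge0$ where $f(x,t)\ge0\ge -ct$ trivially. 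Applying Proposition \ref{hl} we conclude that either $u_+\equiv 0$, which is excluded, or $u_+\in\mathrm{int}\,(C^0_\delta(\overline\Omega)_+)$. The symmetric construction with $f_-(x,t)=f(x,-t^-)$ yields $u_-\in-\mathrm{int}\,(C^0_\delta(\overline\Omega)_+)$.

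The main obstacle I anticipate is the sign argument turning the minimizer of the truncated functional into a solution of the original problem, i.e.\ verifying $u_+\ge 0$ a.e.\ cleanly via the nonlocal inequality \eqref{np} (the analogue of testing with $u^-$ in the local case), and making sure the hypotheses of the Hopf lemma in Proposition \ref{hl} are met — in particular checking that the lower bound $f(x,t)\ge -ct$ required there is available from $\h$ in the region where $u_+$ takes values; since $u_+\ge 0$ this reduces to the trivial bound $f(x,t)\ge 0$ for $t\ge 0$ from $\h$ \ref{h12}, but the bookkeeping should be spelled out. Coercivity and weak lower semicontinuity, by contrast, are routine given $\h$ \ref{h13} and the compact subcritical embedding.
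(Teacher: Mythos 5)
Your proposal is correct and follows essentially the same route as the paper: truncate, minimize the coercive and weakly lower semicontinuous functional $\varphi_+$, use $\beta>\tfrac{\lambda_s^1(\Omega)}{2}$ and $\mu\hat u_1$ to show the minimum value is negative, and then upgrade the sign via the Hopf lemma. The only (harmless) deviations are that you obtain $u_+\ge 0$ by testing $\varphi_+'(u_+)=0$ with $u_+^-$ and \eqref{np} (the paper invokes Proposition \ref{wmp} here, reserving your computation for the superlinear case in Proposition \ref{sup2}, and your step tacitly uses $f(x,0)=0$, which does follow from $\h$ \ref{h12} and continuity), and that you pass from $\varphi_+'(u_+)=0$ to $\varphi'(u_+)=0$ directly via $u_+\ge 0$ instead of going through the $C^0_\delta$ versus $H^s_0$ local-minimizer result of Proposition \ref{lm} --- which suffices for this statement, although the local-minimizer information is what the paper actually needs later in Theorem \ref{sub3}.
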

\begin{proof}
We define $\varphi$ as in \eqref{phi}. Besides we introduce two truncated energy functionals by setting for all $u\in H^s_0(\Omega)$
\beq\label{phipm}
\varphi_\pm(u)=\frac{\|u\|^2}{2}-\int_\Omega F_\pm(x,u)\,dx,
\eeq
where for all $(x,t)\in\Omega\times\R$ we have set
\[f_\pm(x,t)=f(x,t^\pm), \ F_\pm(x,t)=\int_0^t f_\pm(x,\tau)\,d\tau.\]
We focus on the functional $\varphi_+$. Clearly $\varphi_+\in C^1(H^s_0(\Omega))$. We prove now that $\varphi_+$ is coercive in $H^s_0(\Omega)$, i.e.,
\beq\label{coer}
\lim_{\|u\|\to\infty}\varphi_+(u)=\infty.
\eeq
Indeed, by hypotheses $\h$ \ref{h11}, \ref{h13}, for all $\eps>0$ we can find $C_\eps>0$ s.t.\ for a.e. $x\in\Omega$ and all $t\in\R$
\beq\label{sub21}
0\le F_+(x,t)\le C_\eps+\eps t^2.
\eeq
By Proposition \ref{ev} \ref{ev1} and \eqref{sub21}, we have for all $u\in\s$
\begin{align*}
\varphi_+(u) &\ge \frac{\|u\|^2}{2}-\int_\Omega(C_\eps+\eps u^2)\,dx \\
&\ge \Big(\frac{1}{2}-\frac{\eps}{\lambda^1_s(\Omega)}\Big)\|u\|^2-C_\eps|\Omega|.
\end{align*}
If we choose $\eps<\frac{\lambda^1_s(\Omega)}{2}$, the latter tends to $\infty$ as $\|u\|\to\infty$, so \eqref{coer} follows. Moreover, $\varphi_+$ is sequentially weakly lower semi-continuous in $\s$. Indeed, let $u_n\rightharpoonup u$ in $\s$. Passing if necessary to a subsequence, we may assume $u_n\to u$ in $L^p(\Omega)$ and $u_n(x)\to u(x)$ for a.e. $x\in\Omega$, moreover there exists $g\in L^p(\Omega)$ s.t. $|u_n(x)|\le g(x)$ for a.e. $x\in\Omega$ and all $n\in\N$ (see \cite[Theorem 4.9]{B}), hence
\[\lim_n\int_\Omega F_+(x,u_n)\,dx=\int_\Omega F_+(x,u)\,dx.\]
Besides, by convexity we have
\[\liminf_n\frac{\|u_n\|^2}{2}\ge\frac{\|u\|^2}{2},\]
so
\[\liminf_n\varphi_+(u_n)\ge\varphi_+(u).\]
Thus, there exists $u_+\in\s$ s.t.
\beq\label{sub22}
\varphi_+(u_+)=\inf_{u\in\s}\varphi_+(u).
\eeq
In particular, $u_+\in K(\varphi_+)$. By $\h$ \ref{h12} and Proposition \ref{wmp} we have $u_+\in\s_+$. It remains to prove that $u_+\neq 0$. Here we use our assumption on $\beta$: let $\beta'\in (0,\beta)$ be s.t. $\beta'>\frac{\lambda_s^1(\Omega)}{2}$. By $\h$ \ref{h14}, we can find $\sigma>0$ s.t. $F_+(x,t)>\beta' t^2$ for a.e. $x\in\Omega$ and all $|t|\le\sigma$. Let $\hat u_1\in{\rm int}\,(C^0_\delta(\overline\Omega)_+)$ be defined as in Proposition \ref{ev} \ref{ev1}, then for $\mu>0$ small enough we have $\|\mu\hat u_1\|_\infty\le\sigma$, hence
\begin{align*}
\varphi_+(\mu\hat u_1) &\le \frac{\|\mu\hat u_1\|^2}{2}-\int_\Omega\beta'(\mu\hat u_1)^2\,dx \\
&= \mu^2\Big(\frac{1}{2}-\frac{\beta'}{\lambda_s^1(\Omega)}\big)\|\hat u_1\|^2<0.
\end{align*}
By \eqref{sub22} we have $\varphi_+(u_+)<0$, hence $u_+\neq 0$. By Proposition \ref{hl} we deduce $u_+\in{\rm int}\,(C^0_\delta(\overline\Omega)_+)$. Noting that $\varphi(u)=\varphi_+(u)$ for all $u\ge 0$, we see that $u_+$ is a local minimizer of $\varphi$ in $C^0_\delta(\overline\Omega)$, hence by Proposition \ref{lm} a local minimizer of  $\varphi$ in $\s$. In particular $u_+\in K(\varphi)$, hence $u_+$ is a positive solution of \eqref{pro}.
\vskip2pt
\noindent
Similarly, we find another local minimizer $u_-\in -{\rm int}\,(C^0_\delta(\overline\Omega)_+)$ of $\varphi$, which turns out to be a negative solution of \eqref{pro}.
\end{proof}

\noindent
Now, taking $\beta>0$ even bigger, we achieve a third non-zero solution:

\begin{theorem}\label{sub3}
Let $\h$ hold with $\beta>\frac{\lambda_s^2(\Omega)}{2}$. Then \eqref{pro} admits at least three non-zero solutions $u_\pm\in\pm{\rm int}\,(C^0_\delta(\overline\Omega)_+)$, $\tilde u\in C^0_\delta(\overline\Omega)\setminus\{0\}$.
\end{theorem}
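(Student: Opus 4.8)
The plan is to argue by contradiction. Suppose that, apart from $0$, the only solutions of \eqref{pro} are the two constant-sign solutions $u_\pm\in\pm\,{\rm int}\,(C^0_\delta(\overline\Omega)_+)$ produced by Proposition \ref{sub2} (which applies since $\beta>\frac{\lambda^2_s(\Omega)}{2}>\frac{\lambda^1_s(\Omega)}{2}$ by Proposition \ref{ev}), so that $K(\varphi)=\{0,u_+,u_-\}$. We will then exhibit $\tilde u\in K(\varphi)$ with $\max\{\varphi(u_+),\varphi(u_-)\}<\varphi(\tilde u)<0$, which is absurd. The function $\tilde u$ will be a mountain pass point of $\varphi$ lying between $u_+$ and $u_-$, and the strict inequality $\beta>\frac{\lambda^2_s(\Omega)}{2}$ will be used precisely to push the mountain pass level strictly below $\varphi(0)=0$.

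First I would record the set-up. As in the proof of Proposition \ref{sub2}, $\varphi$ and the truncated functionals $\varphi_\pm$ of \eqref{phipm} are coercive, sequentially weakly lower semicontinuous and satisfy \eqref{c}; moreover $\varphi(u)\le\varphi_\pm(u)$ for all $u\in\s$, since $\h$ \ref{h12} makes $F(x,\cdot)\ge 0$. Under the contradiction hypothesis, the critical points of $\varphi_+$ are precisely the nonnegative solutions of \eqref{pro} (because $\h$ \ref{h12} makes $f_+\ge 0$, so Proposition \ref{wmp} applies to $\varphi_+$), hence $K(\varphi_+)=\{0,u_+\}$; since $\varphi_+$ is coercive and $\varphi_+(0)=0>\varphi_+(u_+)$, the point $u_+$ is its unique global minimizer and $\varphi_+(u_+)=\inf_\s\varphi_+<0$, and symmetrically for $\varphi_-$.

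The core of the argument is to build a path joining $u_+$ to $u_-$ along which $\varphi<0$, in three pieces, for $\mu>0$ small. For the two outer pieces I would invoke the second deformation theorem (Theorem \ref{sdt}) for $\varphi_+$, with $a=\varphi_+(u_+)=\inf_\s\varphi_+$ and $b$ any level in $(\varphi_+(\mu\hat u_1),0)$ (a legitimate choice, since $\varphi_+$ has no critical values in $(a,b)$, $K_a(\varphi_+)=\{u_+\}$ is finite, and $\varphi_+(\mu\hat u_1)<0$ for small $\mu$, exactly as in Proposition \ref{sub2}): since the sublevel set $\{\varphi_+\le a\}$ is the singleton $\{u_+\}$, it is a strong deformation retract of $\{\varphi_+\le b\}$, which is therefore path-connected, and $\{\varphi_+\le b\}\subseteq\{\varphi\le b\}\subseteq\{\varphi<0\}$; so there is a path from $\mu\hat u_1$ to $u_+$ on which $\varphi<0$, and symmetrically one from $-\mu\hat u_1$ to $u_-$. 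For the middle piece, fix $\beta'\in(\frac{\lambda^2_s(\Omega)}{2},\beta)$; by the variational characterization of $\lambda^2_s(\Omega)$ in Proposition \ref{ev} \ref{ev2} there is $\gamma_0\in\Gamma_1$ with $\|\gamma_0(t)\|^2<2\beta'$ for all $t$, and I claim $\varphi(\mu\gamma_0(t))<0$ for every $t$ once $\mu$ is small. Indeed, by $\h$ \ref{h14} there is $\sigma>0$ with $F(x,\tau)\ge(\beta-\eps)\tau^2$ for $|\tau|\le\sigma$, while $F\ge 0$ everywhere by $\h$ \ref{h12}; using $\|\gamma_0(t)\|_2=1$ and the uniform integrability of the compact family $\{\gamma_0(t)^2:t\in[0,1]\}\subset L^1(\Omega)$ to bound $\int_{\{|\mu\gamma_0(t)|>\sigma\}}\gamma_0(t)^2\,dx$, one gets $\int_\Omega F(x,\mu\gamma_0(t))\,dx\ge(\beta-\eps)(1-\eta)\mu^2$ for $\mu$ small and any prescribed $\eta>0$, hence $\varphi(\mu\gamma_0(t))\le\mu^2\big(\beta'-(\beta-\eps)(1-\eta)\big)<0$ once $\eps,\eta$ are chosen small. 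Concatenating the three pieces yields, with $\Gamma$ the set of paths from $u_+$ to $u_-$, that $c:=\inf_{\gamma\in\Gamma}\max_{t\in[0,1]}\varphi(\gamma(t))<0$.

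To conclude, let $u_0\in\{u_+,u_-\}$ attain $\max\{\varphi(u_+),\varphi(u_-)\}$ and $u_1$ be the other one. By the standard dichotomy for local minimizers of $C^1$ functionals satisfying \eqref{c}, for small $r$ either $\inf_{\|u-u_0\|=r}\varphi>\varphi(u_0)$, or $\varphi$ has a critical point at distance $r$ from $u_0$ with critical value $\varphi(u_0)<0$; the second alternative would produce a point of $K(\varphi)\setminus\{0,u_+,u_-\}$ (take $r<\|u_+-u_-\|$), contradicting the hypothesis. Hence the mountain pass geometry of Theorem \ref{mpt} holds for $u_0,u_1,r$, and it gives $\tilde u\in K_c(\varphi)$ with $\varphi(\tilde u)=c\ge\eta_r>\varphi(u_0)=\max\{\varphi(u_+),\varphi(u_-)\}$ and $c<0$ by the previous step; thus $\tilde u\notin\{0,u_+,u_-\}$, the desired contradiction, so \eqref{pro} does have a third nonzero solution, which lies in $C^0_\delta(\overline\Omega)$ by Proposition \ref{reg}. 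I expect the main obstacle to be the middle piece of the negative path: one must make the purely topological characterization of $\lambda^2_s(\Omega)$, a statement about curves on the $L^2$-unit sphere of $\s$, interact with the merely asymptotic, $L^\infty$-free lower bound on $F$ near $0$ in $\h$ \ref{h14}; this is exactly what forces the uniform-integrability argument. A lesser nuisance is securing the strict mountain pass inequality, handled by the dichotomy above.
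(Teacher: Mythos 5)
Your proof is correct and follows the same overall strategy as the paper: argue by contradiction from $K(\varphi)=\{0,u_+,u_-\}$, secure the mountain pass geometry between $u_+$ and $u_-$ via the standard dichotomy for local minimizers, and then refute the resulting critical value by concatenating a three-piece path on which $\varphi<0$, with the outer pieces coming from the second deformation theorem applied to $\varphi_\pm$ and the middle piece from the minimax characterization of $\lambda^2_s(\Omega)$ in Proposition \ref{ev} \ref{ev2}. The one place where you genuinely diverge is the middle piece: the paper replaces the optimal path $\gamma_1\in\Gamma_1$ by one that is $L^\infty(\Omega)$-valued and continuous in the $L^\infty$-topology (invoking density of $C^\infty_0(\Omega)$), so that $\|\eps\gamma_1(t)\|_\infty\le\sigma$ and $\h$ \ref{h14} applies pointwise everywhere; you instead keep a general $\gamma_0\in\Gamma_1$, use $F\ge 0$ (from $\h$ \ref{h12}) to discard the set $\{|\mu\gamma_0(t)|>\sigma\}$, and control its contribution to $\|\gamma_0(t)\|_2^2$ by uniform integrability of the $L^1$-compact family $\{\gamma_0(t)^2\}_{t\in[0,1]}$. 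Your variant is sound and has the advantage of not requiring any modification of the path (the paper's density step silently needs the modified path to stay in $\Gamma_1$, i.e.\ to keep the $L^2$-normalization, the endpoints $\pm\hat u_1$, and the energy bound \eqref{sub35}); the paper's variant is shorter once that modification is granted. Minor, harmless deviations: you take the deformation level $b<0$ rather than $b=0$ (avoiding the excision of $K_0(\varphi_+)=\{0\}$), and you conclude directly from $\max\{\varphi(u_+),\varphi(u_-)\}<c<0$ rather than first forcing $\tilde u=0$, $c=0$.
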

\begin{proof}
First we note, arguing as in the proof of \eqref{coer}, that
\beq\label{coer1}
\lim_{\|u\|\to\infty}\varphi(u)=\infty.
\eeq
Now we prove that $\varphi$ satisfies \eqref{c} (which in this case is equivalent to the Palais-Smale condition). Let $(u_n)$ be a sequence in $\s$, s.t. $|\varphi(u_n)|\le C$ for all $n\in\N$ and $(1+\|u_n\|)\varphi'(u_n)\to 0$ in $H^{-s}(\Omega)$. By \eqref{coer1}, $(u_n)$ is bounded in $\s$. Hence, passing if necessary to a subsequence, we may assume $u_n\rightharpoonup u$ in $\s$, $u_n\to u$ in $L^p(\Omega)$ and $L^1(\Omega)$, and $u_n(x)\to u(x)$ for a.e. $x\in\Omega$, with some $u\in\s$. Moreover, by \cite[Theorem 4.9]{B} there exists $g\in L^p(\Omega)$ s.t. $|u_n(x)|\le g(x)$ for all $n\in\N$ and a.e. $x\in\Omega$. Using such relations along with $\h$ \ref{h11}, we have for all $n\in\N$
\begin{align*}
\|u_n-u\|^2 &= \langle u_n,u_n-u\rangle-\langle u,u_n-u\rangle \\
&= \varphi'(u_n)(u_n-u)+\int_\Omega f(x,u_n)(u_n-u)\,dx-\langle u,u_n-u\rangle \\
&\le \|\varphi'(u_n)\|_*\|u_n-u\|+\int_\Omega a_0(1+|u_n|^{p-1})|u_n-u|\,dx-\langle u,u_n-u\rangle \\
&\le \|\varphi'(u_n)\|_*\|u_n-u\|+a_0(\|u_n-u\|_1+\|u_n\|_p^{p-1}\|u_n-u\|_p)-\langle u,u_n-u\rangle,
\end{align*}
and the latter tends to $0$ as $n\to\infty$. Thus, $u_n\to u$ in $\s$.
\vskip2pt
\noindent
By $\h$ \ref{h12} we have $0\in K(\varphi)$, while from Proposition \ref{sub2} we know that $u_\pm\in K(\varphi)\setminus\{0\}$. We aim at proving existence of a further critical point $\tilde u\in\s$. We argue by contradiction, assuming
\beq\label{sub31}
K(\varphi)=\{0,u_+,u_-\}.
\eeq
It is not restrictive to assume that $\varphi(u_+)\ge\varphi(u_-)$ and that $u_+$ is a strict local minimizer of $\varphi$, so we can find $r\in(0,\|u_+-u_-\|)$ s.t. $\varphi(u)>\varphi(u_+)$ for all $u\in\s$, $0<\|u-u_+\|\le r$. Moreover, we have
\beq\label{sub32}
\eta_r:=\inf_{\|u-u_+\|=r}\varphi(u)>\varphi(u_+).
\eeq
Otherwise, we could find a sequence $(u_n)$ in $\s$ s.t. $\|u_n-u_+\|=r$ for all $n\in\N$, $\varphi(u_n)\to\varphi(u_+)$ and $\varphi'(u_n)\to 0$ in $H^{-s}(\Omega)$ (see \cite[Corollary 5.12]{MMP}). Then, by \eqref{c} we would have $u_n\to\bar u$ in $\s$ for some $\bar u\in\s$, $\|\bar u-u_+\|=r$, hence in turn $\varphi(\bar u)=\varphi(u_+)$, a contradiction.
\vskip2pt
\noindent
Now set
\[\Gamma=\{\gamma\in C([0,1],\s):\,\gamma(0)=u_+,\,\gamma(1)=u_-\},\]
\[c=\inf_{\gamma\in\Gamma}\max_{t\in[0,1]}\varphi(\gamma(t)).\]
By Theorem \ref{mpt} we have $c\ge\eta_r$ and there exists $\tilde u\in K_c(\varphi)$. By \eqref{sub32} we have $\tilde u\neq u_\pm$. So, \eqref{sub31} implies $\tilde u=0$, hence $c=0$. To reach a contradiction, we will construct a path $\gamma\in\Gamma$ s.t.
\beq\label{sub33}
\max_{t\in[0,1]}\varphi(\gamma(t))<0,
\eeq
so that $c<0$. Let $\beta'\in(0,\beta)$, $\theta>0$ be s.t.
\beq\label{sub34}
\beta'>\frac{\lambda_s^2(\Omega)+\theta}{2}.
\eeq
By $\h$ \ref{h14} there exists $\sigma>0$ s.t. $F(x,t)>\beta't^2$ for a.e. $x\in\Omega$ and all $|t|\le\sigma$. Besides, by Proposition \ref{ev} \ref{ev2} there exists $\gamma_1\in\Gamma_1$ s.t.
\beq\label{sub35}
\max_{t\in[0,1]}\|\gamma_1(t)\|^2<\lambda_s^2(\Omega)+\theta.
\eeq
Since $C^\infty_0(\Omega)$ is dense in $\s$ (see \cite[Theorem 2]{FSV}), we can choose $\gamma_1(t)\in L^\infty(\Omega)$ for all $t\in[0,1]$ and $\gamma_1$ continuous with respect to the $L^\infty(\Omega)$-topology. So, by choosing $\eps>0$ small enough, we have $\|\eps\gamma_1(t)\|_\infty\le\sigma$ for all $t\in[0,1]$. Thus, by \eqref{sub35} and recalling that $\|\gamma_1(t)\|_2=1$, we have for all $t\in[0,1]$
\begin{align*}
\varphi(\eps\gamma_1(t)) &\le \frac{\eps^2\|\gamma_1(t)\|^2}{2}-\beta'\eps^2\|\gamma_1(t)\|_2^2 \\
&< \eps^2\Big(\frac{\lambda_s^2(\Omega)+\theta}{2}-\beta'\Big),
\end{align*}
and the latter is negative by \eqref{sub34}. Then, $\eps\gamma_1$ is a continuous path joining $\eps\hat u_1$ and $-\eps\hat u_1$ s.t.
\beq\label{sub36}
\max_{t\in[0,1]}\varphi(\eps\gamma_1(t))<0.
\eeq
By $\h$ \ref{h12} and Proposition \ref{wmp}, it is easily seen that $K(\varphi_+)\subseteq K(\varphi)$. More precisely, by \eqref{sub31}, we have $K(\varphi_+)=\{0,u_+\}$. Set $a=\varphi_+(u_+)$, $b=0$, then $\varphi_+$ satisfies all assumptions of Theorem \ref{sdt}, so there exists a continuous deformation $h_+:[0,1]\times(\overline\varphi^0_+\setminus\{0\})\to(\overline\varphi^0_+\setminus\{0\})$ s.t.
\[\begin{cases}
\text{$h_+(t,u_+)=u_+$ for all $t\in[0,1]$} \\
\text{$h_+(1,u)=u_+$ for all $u\in(\overline\varphi^0_+\setminus\{0\})$} \\
\text{$t\mapsto\varphi_+(h_+(t,u))$ is decreasing for all $u\in(\overline\varphi^0_+\setminus\{0\})$.}
\end{cases}\]
Set for all $t\in[0,1]$
\[\gamma_+(t)=h_+(t,\eps\hat u_1),\]
then $\gamma_+\in C([0,1],\s)$ is a path joining $\eps\hat u_1$ and $u_+$, s.t. $\varphi_+(\gamma_+(t))<0$ for all $t\in[0,1]$. Noting that $\varphi(u)\le\varphi_+(u)$ for all $u\in\s$, we have
\beq\label{sub37}
\max_{t\in[0,1]}\varphi(\gamma_+(t))<0.
\eeq
Similarly, we construct a path $\gamma_-\in C([0,1],\s)$ joining $-\eps\hat u_1$ and $u_-$, s.t.
\beq\label{sub38}
\max_{t\in[0,1]}\varphi(\gamma_-(t))<0.
\eeq
Concatenating $\gamma_+$, $\eps\gamma_1$, and $\gamma_-$ (with convenient changes of parameter) and considering \eqref{sub36} - \eqref{sub38}, we construct a path $\gamma\in\Gamma$ satisfying \eqref{sub33}, against \eqref{sub31} and the definition of the mountain pass level $c$.
\vskip2pt
\noindent
So, we conclude that there exists a fourth critical point $\tilde u\in K(\varphi)\setminus\{0,u_+,u_-\}$, which turns out to be a non-zero solution of \eqref{pro}, concluding the proof.
\end{proof}

\section{The superlinear case}\label{sec4}

\noindent
In this section we prove the existence of three non-zero solutions of problem \eqref{pro} when $f(x,\cdot)$ is superlinear at infinity. Following an idea first appeared in \cite{W}, we will apply the mountain pass theorem and Morse theory. Precisely, we make on the nonlinearity $f$ the following assumtpions:
\begin{itemize}[leftmargin=1cm]
\item[$\hh$] $f:\Omega\times\R\to\R$ is a Carath\'eodory mapping, satisfying
\begin{enumroman}
\item\label{h21} $|f(x,t)|\le a_0(1+|t|^{p-1})$ for a.e. $x\in\Omega$ and all $t\in\R$ ($a_0>0$, $p\in(2,2^*_s)$);
\item\label{h22} $f(x,t)t\le 0$ for a.e. $x\in\Omega$ and all $t\in[-\sigma,\sigma]$ ($\sigma>0$);
\item\label{h23} $f(x,t)t\ge -c_0t^2$ for a.e. $x\in\Omega$ and all $t\in\R$ ($c_0>0$);
\item\label{h24} $\displaystyle\lim_{|t|\to\infty}\frac{F(x,t)}{t^2}=\infty$ uniformly for a.e. $x\in\Omega$;
\item\label{h25} $\displaystyle\liminf_{|t|\to\infty}\frac{f(x,t)t-2F(x,t)}{|t|^q}>0$ uniformly for a.e. $x\in\Omega$ $\displaystyle\Big(q\in\Big(\frac{(p-2)N}{2s},2^*_s\Big)\Big)$.
\end{enumroman}
\end{itemize}
Condition $\hh$ \ref{h25} is a mild version of the classical Ambrosetti-Rabinowitz condition (see \cite{R}), and since $p<2^*_s$ we can always assume $q<p$ in it.

\begin{example}
Let $a,b\in L^\infty(\Omega)$ be s.t. $a(x)\ge\alpha$, $b(x)\ge\beta$ for a.e. $x\in\Omega$ ($\alpha,\beta>0$), and set for all $(x,t)\in\Omega\times\R$
\[f(x,t)=-a(x)t+b(x)|t|^{p-2}t.\]
Then, $f$ satisfies hypotheses $\hh$ with convenient $a_0$, $c_0$, and $\sigma$. This choice of $f$ belongs in the class of concave-convex nonlinearities, whose study (in the classical case $s=1$) started with \cite{ABC}.
\end{example}

\noindent
By hypothesis $\hh$ \ref{h22}, problem \eqref{pro} admits the zero solution. We focus now on constant sign solutions:

\begin{proposition}\label{sup2}
Let $\hh$ hold. Then \eqref{pro} admits at least two non-zero solutions $u_\pm\in\pm{\rm int}\,(C^0_\delta(\overline\Omega)_+)$.
\end{proposition}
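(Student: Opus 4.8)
The plan is to find the two constant-sign solutions $u_\pm$ as local minimizers of suitably truncated energy functionals, mimicking the structure of the proof of Proposition \ref{sub2} but with the roles adapted to the superlinear/concave-convex setting. I would focus on the positive solution, the negative one being analogous. First I would introduce a further truncation of $f$ at the level $\sigma$ coming from $\hh$ \ref{h22}, setting $\tilde f_+(x,t)=f(x,t^+)$ if $t\le\sigma$ and $\tilde f_+(x,t)=f(x,\sigma)$ if $t>\sigma$, with primitive $\tilde F_+$, and the associated functional $\tilde\varphi_+\in C^1(\s)$. Because $\tilde f_+$ is bounded, $\tilde\varphi_+$ is coercive and sequentially weakly lower semicontinuous on $\s$ (the argument for weak l.s.c.\ being verbatim the one in Proposition \ref{sub2}, using the $L^p$-compactness of the embedding and dominated convergence), hence it attains a global minimum at some $u_+\in\s$.

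Next I would show $u_+\neq 0$. Here the sign condition $\hh$ \ref{h22} says that $t\mapsto f(x,t)t\le 0$ near $0$, so that $\tilde F_+(x,t)$ need not be positive; rather, the point is that $f$ is \emph{concave-like} near zero in the sense that makes $\hat u_1$ a descent direction. Actually the correct mechanism is the opposite sign to Proposition \ref{sub2}: I would instead observe that $\hh$ \ref{h22} forces $\tilde F_+(x,t)\ge 0$ is false, so I must argue differently — the natural route is that $u_+=0$ would make $0$ a global minimizer of $\tilde\varphi_+$, and then test with $t\hat u_1$ for small $t>0$. Using $\hh$ \ref{h22} we get $\tilde F_+(x,t)\le 0$... which would give $\tilde\varphi_+(t\hat u_1)\ge 0=\tilde\varphi_+(0)$, no contradiction. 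So the nontriviality of $u_+$ cannot come from the behaviour at zero; it must instead be extracted, as in concave-convex problems, by \textbf{first} finding a sub/supersolution or by noting that the truncation at $\sigma$ is what produces a nontrivial minimizer. The cleanest approach: apply $\hh$ \ref{h23} to get $\tilde f_+(x,t)\ge -c_0 t$ and use the fractional Hopf lemma (Proposition \ref{hl}) together with a fixed-sign argument; concretely one shows $\tilde\varphi_+$ has a \emph{strictly negative} infimum by exploiting that $f(x,\cdot)$ is genuinely superlinear (hypothesis $\hh$ \ref{h24}) just beyond a neighbourhood of $0$ — but this contradicts coercivity of the truncated functional unless the truncation destroys it. Because of this tension I expect \textbf{the nontriviality of $u_\pm$ to be the main obstacle}, and I would resolve it by combining $\hh$ \ref{h22} (which guarantees $\tilde\varphi_+$ has a strict local minimum at $0$ in the $C^0_\delta$-topology, via Proposition \ref{lm}) with a mountain-pass or direct minimization argument producing a second critical point of $\tilde\varphi_+$ at negative level; then that critical point, being nonzero, is the sought $u_+$.

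Concretely, here is the route I would follow. By $\hh$ \ref{h22} and Proposition \ref{lm}, $0$ is a strict local minimizer of $\tilde\varphi_+$; since $\tilde\varphi_+$ is coercive, if $0$ were the \emph{only} critical point it would be a global minimizer, so $\inf_{\s}\tilde\varphi_+=0$ would be attained only at $0$. But $\hh$ \ref{h24} makes $F$ superquadratic, so along the ray $t\hat u_1$ the \emph{untruncated} functional $\varphi_+$ tends to $-\infty$; one must check that the $\sigma$-truncation still allows $\tilde\varphi_+$ to take negative values — indeed for $t$ moderately large, $\tilde F_+(x,t\hat u_1)=\tilde F_+(x,\sigma)+f(x,\sigma)(t\hat u_1-\sigma)$ grows linearly in $t$ while $\|t\hat u_1\|^2/2$ grows quadratically, so in fact $\tilde\varphi_+\to+\infty$ and this does \emph{not} work. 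Therefore I instead keep $f$ \emph{untruncated} above $\sigma$ but truncated below (as in $\varphi_+$ of Proposition \ref{sub2}), use $\hh$ \ref{h23} to still have a weak maximum principle giving $u_+\ge 0$, and obtain a nonzero critical point of $\varphi_+$ by the mountain pass theorem: $0$ is a strict local minimum of $\varphi_+$ by $\hh$ \ref{h22}, $\varphi_+(t\hat u_1)\to-\infty$ as $t\to+\infty$ by $\hh$ \ref{h24}, and $\varphi_+$ satisfies the Cerami condition \eqref{c} thanks to $\hh$ \ref{h25} (the standard Ambrosetti--Rabinowitz-type argument: a Cerami sequence is bounded because $\varphi_+(u_n)-\tfrac12\varphi_+'(u_n)(u_n)=\tfrac12\int_\Omega(f(x,u_n)u_n-2F(x,u_n))\,dx$ is bounded, which by $\hh$ \ref{h25} controls $\|u_n\|_q^q$, and then $\hh$ \ref{h21} with $q>(p-2)N/2s$ closes the estimate via interpolation and the fractional Sobolev embedding; strong convergence then follows as in the proof of Theorem \ref{sub3}). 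Theorem \ref{mpt} yields $u_+\in\s$ with $\varphi_+'(u_+)=0$ and $\varphi_+(u_+)>0=\varphi_+(0)$, hence $u_+\neq 0$; by $\hh$ \ref{h22}, \ref{h23} and Proposition \ref{wmp} applied to the nonlinearity $f_+(x,t)+c_0 t$ we get $u_+\in\s_+$, and then Proposition \ref{hl} gives $u_+\in\mathrm{int}\,(C^0_\delta(\overline\Omega)_+)$. Since $\varphi=\varphi_+$ on the positive cone, $u_+$ is a local... no — $u_+$ is not a minimizer, so here I instead invoke that any critical point of $\varphi_+$ in $\mathrm{int}\,(C^0_\delta(\overline\Omega)_+)$ is a critical point of $\varphi$ (because the truncation is inactive there), hence a genuine positive solution of \eqref{pro}. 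The construction of $u_-\in-\mathrm{int}\,(C^0_\delta(\overline\Omega)_+)$ via the symmetric functional $\varphi_-$ is identical. The delicate points to write out carefully are the verification of \eqref{c} from $\hh$ \ref{h25} (the interpolation exponent bookkeeping) and the passage from a critical point of $\varphi_\pm$ to one of $\varphi$.
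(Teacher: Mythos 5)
Your final route is essentially the paper's: keep the one-sided truncation $\varphi_+$, show $0$ is a strict local minimizer via $\hh$ \ref{h22} and Proposition \ref{lm}, show $\varphi_+(\mu\hat u_1)\to-\infty$ via $\hh$ \ref{h24}, verify \eqref{c} via $\hh$ \ref{h25} plus interpolation, and run the mountain pass theorem to get $u_+\neq 0$ at a positive level; the false starts (double truncation at $\sigma$, global minimization) are correctly discarded. Two bookkeeping points are off, though both are repairable. First, you cannot get $u_+\ge 0$ from Proposition \ref{wmp}: that proposition requires the right-hand side of \eqref{pro} to be nonnegative, and neither $f_+$ nor your proposed shift $f_+(x,t)+c_0t$ puts the equation in that form (adding $c_0t$ changes the operator, not just the reaction). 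The paper explicitly notes that Proposition \ref{wmp} does not apply here and instead tests $\varphi_+'(u_+)=0$ against $(u_+)^-$ and uses \eqref{np} to conclude $\|(u_+)^-\|=0$. Second, in the Cerami verification the quantity $\int_\Omega(f_+(x,u_n)u_n-2F_+(x,u_n))\,dx$ together with $\hh$ \ref{h25} only controls $\|u_n^+\|_q^q$, since $f_+$ and $F_+$ vanish for $t\le 0$; the negative parts must be handled separately, again via \eqref{np}, which gives $\|u_n^-\|\to 0$. With these two uses of \eqref{np} inserted, your argument matches the paper's proof, including the final step that a critical point of $\varphi_+$ lying in ${\rm int}\,(C^0_\delta(\overline\Omega)_+)$ (obtained from $\hh$ \ref{h23} and Proposition \ref{hl}) is a genuine critical point of $\varphi$.
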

\begin{proof}
We define $\varphi$, $\varphi_\pm$ as in \eqref{phi}, \eqref{phipm}. We focus mainly on $\varphi_+$.
\vskip2pt
\noindent
First we prove that $\varphi_+$ satisfies \eqref{c}. Let $(u_n)$ be a sequence in $\s$ s.t. $|\varphi_+(u_n)|\le C$ for all $n\in\N$ and $(1+\|u_n\|)\varphi_+'(u_n)\to 0$ in $H^{-s}(\Omega)$. Then we have for all $n\in\N$
\[-\|u_n\|^2+\int_\Omega f_+(x,u_n)u_n\,dx\le C,\]
\[\|u_n\|^2-2\int_\Omega F_+(x,u_n)\,dx\le C,\]
which imply
\beq\label{sup21}
\int_\Omega\big(f_+(x,u_n)u_n-2F_+(x,u_n)\big)\,dx\le C
\eeq
Clearly $\hh$ \ref{h25} yields
\[\lim_{t\to\infty}\frac{f_+(x,t)t-2F_+(x,t)}{t^q}>0\]
uniformly for a.e. $x\in\Omega$. So we can find $\beta,M>0$ s.t. $f_+(x,t)t-2F_+(x,t)\ge\beta t^q$ for a.e. $x\in\Omega$ and all $t>M$. We claim that $(u_n)$ is bounded in $L^q(\Omega)$. Indeed, for all $n\in\N$ we have
\[\|u_n\|_q^q=\|u_n^+\|_q^q+\|u_n^-\|_q^q.\]
By the previous inequality we have
\begin{align*}
\beta\|u_n^+\|_q^q &= \int_{\{0<u_n\le M\}}\beta u_n^q\,dx+\int_{\{u_n>M\}}\beta u_n^q\,dx \\
&\le \beta M^q|\Omega|+\int_{\{u_n>M\}}\big(f_+(x,u_n)u_n-2F_+(x,u_n)\big)\,dx \\
&\le C+\int_\Omega\big(f_+(x,u_n)u_n-2F_+(x,u_n)\big)\,dx,
\end{align*}
and the latter is bounded by \eqref{sup21}. Besides, using \eqref{np}, we easily have
\[\|u_n^-\|^2\le -\varphi'_+(u_n)(u_n^-)\le \|\varphi'_+(u_n)\|_*\|u_n^-\|,\]
and the latter tends to $0$ as $n\to\infty$. By the continuous embedding $\s\hookrightarrow L^q(\Omega)$, this yields $\|u_n^-\|_q\to 0$ as $n\to\infty$. So we deduce that $\|u_n\|_q$ is bounded in $\R$.
\vskip2pt
\noindent
Using this fact, we want to show that $(u_n)$ is bounded in $\s$ as well. Since $q<p<2^*_s$ in our assumptions, we can find $\tau\in(0,1)$ s.t.
\[\frac{1}{p}=\frac{1-\tau}{q}+\frac{\tau}{2^*_s}.\]
By the interpolation inequality (see \cite[p.\ 93]{B}) and the continuous embedding $\s\hookrightarrow L^{2^*_s}(\Omega)$, we have for all $n\in\N$
\beq\label{sup22}
\|u_n\|_p\le\|u_n\|_q^{1-\tau}\|u_n\|_{2^*_s}^\tau \le C\|u_n\|^\tau.
\eeq
Again by $(1+\|u_n\|)\varphi_+'(u_n)\to 0$ in $H^{-s}(u_n)$ and $\hh$ \ref{h21} we have for all $n\in\N$
\begin{align*}
\|u_n\|^2 &\le \int_\Omega f_+(x,u_n)u_n\,dx \\
&\le \int_\Omega a_0(1+|u_n|^{p-1})|u_n|\,dx \\
&\le C(1+\|u_n\|_1+\|u_n\|_p^p).
\end{align*}
By \eqref{sup22} and the continuous embeddings $\s\hookrightarrow L^1(\Omega),\,L^p(\Omega)$ we see that
\[\|u_n\|^2\le C(1+\|u_n\|+\|u_n\|^{p\tau}).\]
Since $p\tau<2$ we deduce that $(u_n)$ is bounded in $\s$. Now we conclude as in the proof of Theorem \ref{sub3}.
\vskip2pt
\noindent
Now we prove that $\varphi_+$ is unbounded from below. Indeed, let $\hat u_1$ be defined as in Proposition \ref{ev} \ref{ev1}, and recall that $\|\hat u_1\|^2=\lambda_s^1(\Omega)$, $\|\hat u_1\|_2^2=1$. By $\hh$ \ref{h24}, given $\theta>\frac{\lambda_s^1(\Omega)}{2}$ we can find $K>0$ s.t. $F(x,t)\ge\theta t^2$ for a.e. $x\in\Omega$ and all $|t|>M$. For all $\mu>0$ we have
\begin{align*}
\varphi_+(\mu\hat u_1) &= \frac{\mu^2\|\hat u_1\|^2}{2}-\int_{\{\mu\hat u_1\le M\}}F_+(x,\mu\hat u_1)\,dx-\int_{\{\mu\hat u_1>M\}}F_+(x,\mu\hat u_1)\,dx \\
&\le \frac{\mu^2\lambda^1_s(\Omega)}{2}-\int_{\{\mu\hat u_1>M\}}\theta\mu^2\hat u_1^2\,dx+C \\
&\le \mu^2\Big(\frac{\lambda_s^1(\Omega)}{2}-\theta\Big)+\theta M^2|\Omega|+C,
\end{align*}
and the latter goes to $-\infty$ as $\mu\to\infty$. So
\beq\label{sup23}
\lim_{\mu\to\infty}\varphi_+(\mu\hat u_1)=-\infty.
\eeq
We claim that $0$ is a local minimizer for $\varphi_+$. By $\hh$ \ref{h22} we have $F_+(x,t)\le 0$ for a.e. $x\in\Omega$ and all $|t|\le\sigma$. For all $u\in C^0_\delta(\overline\Omega)$ with
\[\|u\|_{0,\delta}\le\frac{\sigma}{{\rm diam}(\Omega)^s},\]
we have $\|u\|_\infty\le\sigma$, hence
\[\varphi_+(u) \ge \frac{\|u\|^2}{2}\ge 0.\]
So, $0$ is a local minimizer of $\varphi_+$ in $C^0_\delta(\overline\Omega)$. By Proposition \ref{lm}, $0$ is as well a local minimizer of $\varphi_+$ in $\s$. As usual, it is not restrictive to assume that $0$ is a strict local minimizer for both $\varphi_+$ and (reasoning as in the proof of \eqref{sub32}) there exists $r>0$ s.t.
\beq\label{sup24}
\eta^+_r:=\inf_{\|u\|=r}\varphi_+(u)>0.
\eeq
By \eqref{sup23} we can find $\mu>0$ s.t. $\|\mu\hat u_1\|>r$ and $\varphi_+(\mu\hat u_1)<0$. Set
\[\Gamma_+=\{\gamma\in C([0,1],\s):\,\gamma(0)=0,\,\gamma(1)=\mu\hat u_1\},\]
\[c_+=\inf_{\gamma\in\Gamma_+}\max_{t\in[0,1]}\varphi_+(\gamma(t)).\]
By Theorem \ref{mpt} we have $c_+\ge\eta_r^+$ and there exists $u_+\in K_{c_+}(\varphi_+)$. From \eqref{sup24} we see that $c_+>0$, hence $u_+\neq 0$. Testing $\varphi_+'(u_+)=0$ with $(u_+)^-\in\s$ and using \eqref{np}, we get
\[-\|(u_+)^-\|^2\ge\varphi'_+(u_+)((u_+)^-)=0,\]
i.e., $u_+\in\s_+$ (note that Proposition \ref{wmp} does not apply here). By $\hh$ \ref{h23} we can apply Proposition \ref{hl} and deduce $u_+\in{\rm int}\,(C^0_\delta(\overline\Omega)_+)$, in particular $u_+\in K(\varphi)$. So, $u_+$ is a positive solution of \eqref{pro}.
\vskip2pt
\noindent
A similar argument, applied to $\varphi_-$, leads to the existence of a negative solution $u_-\in -{\rm int}\,(C_\delta^0(\overline\Omega)_+)$ of \eqref{pro}.
\end{proof}

\noindent
Using the critical groups, we can improve the conclusion of Proposition \ref{sup2} under the same assumptions:

\begin{theorem}\label{sup3}
Let $\hh$ hold. Then \eqref{pro} admits at least three non-zero solutions $u_\pm\in\pm{\rm int}\,(C^0_\delta(\overline\Omega)_+)$, $\tilde u\in C^0_\delta(\overline\Omega)\setminus\{0\}$.
\end{theorem}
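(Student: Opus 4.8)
The plan is to mimic the structure of the proof of Theorem \ref{sub3}, replacing the second deformation argument (which worked in the sublinear case because $\varphi$ was coercive) by a Morse-theoretic contradiction based on the Poincar\'e--Hopf formula of Theorem \ref{phf}. We already have from Proposition \ref{sup2} the two constant-sign solutions $u_\pm\in\pm{\rm int}\,(C^0_\delta(\overline\Omega)_+)$, obtained as mountain pass points for the truncated functionals $\varphi_\pm$; and $0\in K(\varphi)$ by $\hh$ \ref{h22}. Arguing by contradiction, I would assume $K(\varphi)=\{0,u_+,u_-\}$ and derive a contradiction by computing all the relevant critical groups and checking that the Morse relation fails.

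The key computations are as follows. First, $\varphi$ satisfies \eqref{c}: using $\hh$ \ref{h25} exactly as in the proof of Proposition \ref{sup2} (the truncations $f_\pm$ there can be dispensed with, since \eqref{np} was only used to control $u_n^-$ and here one controls the whole of $u_n$ in $L^q$ via $\hh$ \ref{h25}, then bootstraps to $\s$ via the interpolation inequality \eqref{sup22} and $\hh$ \ref{h21}). Second, I would compute $C_k(\varphi,\infty)$. By $\hh$ \ref{h24}, $\varphi$ is anticoercive along the ray $\mu\hat u_1$, and more is true: a standard argument (using $\hh$ \ref{h24} to get $\varphi(u)\to-\infty$ along all rays through large-norm directions, together with the angle/deformation lemma, e.g. \cite[Proposition 6.64]{MMP} or the technique in \cite{W}) shows $C_k(\varphi,\infty)=0$ for all $k\ge 0$, i.e. $\overline\varphi^c$ is contractible for $c$ very negative. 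Third, I would compute the critical groups at the three assumed critical points. Since $u_\pm$ are mountain pass points of $\varphi_\pm$ and, by Proposition \ref{lm} together with $\hh$ \ref{h23} and Proposition \ref{hl}, they are points where $\varphi$ and $\varphi_\pm$ locally coincide (as $u_\pm\in\pm{\rm int}\,(C^0_\delta(\overline\Omega)_+)$ and $f_\pm(x,\cdot)$ agrees with $f(x,\cdot)$ there), one gets $C_k(\varphi,u_\pm)=\delta_{k,1}\R$ — here I would invoke the fact that a mountain pass point which is an isolated critical point has critical group $\delta_{k,1}\R$ in dimension $1$, or more carefully that $C_1(\varphi_\pm,u_\pm)\ne0$ and then transfer to $\varphi$ via the local minimizer/truncation machinery of Proposition \ref{lm} (cf. \cite{W,IMS}). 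Fourth, the critical group at $0$: here $0$ is a local maximizer of $\varphi$ along $\hat u_1$ (since $F(x,t)$ is concave-type near $0$ by $\hh$ \ref{h22}: $F(x,t)\le0$ for $|t|\le\sigma$), but $0$ is not a local minimizer of $\varphi$, while it \emph{is} a strict local minimizer of both $\varphi_+$ and $\varphi_-$. A local linking / mountain-pass-below argument (the hallmark of \cite{W}) gives $C_k(\varphi,0)=\delta_{k,1}\R$ as well.

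Then the Poincar\'e--Hopf formula of Theorem \ref{phf}, applied with $a<\min\varphi(K(\varphi))$ and $b>\max\varphi(K(\varphi))$, reads
\[
\sum_{u\in\{0,u_+,u_-\}}\sum_{k\ge0}(-1)^k{\rm dim}\,C_k(\varphi,u)=\sum_{k\ge0}(-1)^k{\rm dim}\,C_k(\varphi,\infty),
\]
whose left-hand side equals $(-1)^1+(-1)^1+(-1)^1=-3$ while the right-hand side equals $0$ (all critical groups at infinity vanish). This contradiction shows $K(\varphi)\neq\{0,u_+,u_-\}$, so there is a fourth critical point $\tilde u$, which by Proposition \ref{reg} lies in $C^0_\delta(\overline\Omega)$ and is nonzero, hence a third nonzero solution of \eqref{pro}. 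I expect the main obstacle to be the two Morse computations that genuinely use the structure of \cite{W}: proving $C_k(\varphi,\infty)=0$ for \emph{all} $k$ (this needs $\hh$ \ref{h24} together with a careful deformation argument showing every sublevel set at $-\infty$ is contractible, not merely that $\varphi$ is anticoercive on one ray), and establishing $C_k(\varphi,0)=\delta_{k,1}\R$ via a local linking of the origin — the delicate point being that the order cone $H^s_0(\Omega)_+$ has empty interior, so one must pass through $C^0_\delta(\overline\Omega)$ and Proposition \ref{lm} to legitimately claim that the negative parabola direction $\pm\hat u_1$ and the positive directions inside the cone together produce the linking.
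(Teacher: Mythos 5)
Your overall strategy (contradiction via the Poincar\'e--Hopf formula of Theorem \ref{phf}, with $C_k(\varphi,\infty)=0$ proved by a radial deformation onto the contractible unit sphere) is the one the paper uses, and your treatment of the compactness condition and of the critical groups at infinity is essentially the paper's. But two of your three critical group computations are flawed, and one of them is simply wrong. Under $\hh$ \ref{h22} you have $f(x,t)t\le 0$ for $|t|\le\sigma$, hence $F(x,t)\le 0$ there, hence $\varphi(u)\ge\|u\|^2/2\ge 0$ whenever $\|u\|_\infty\le\sigma$: the origin \emph{is} a (strict, under \eqref{abs}) local minimizer of $\varphi$ in $C^0_\delta(\overline\Omega)$ and therefore, by Proposition \ref{lm}, in $\s$. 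So $C_k(\varphi,0)=\delta_{k,0}\R$ by \eqref{lmcg}; there is no local linking at $0$, and your claim $C_k(\varphi,0)=\delta_{k,1}\R$ has the sign analysis backwards (in \cite{W} the origin is likewise a local minimum). Your final identity ``$-3=0$'' therefore rests on a false computation; with the correct groups the alternating sum is $1-1-1=-1\neq 0$, so the contradiction survives, but not by your accounting.

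The second gap is the transfer $C_k(\varphi,u_\pm)=C_k(\varphi_\pm,u_\pm)$. It is not true that $\varphi$ and $\varphi_+$ ``locally coincide'' near $u_+$ in the topology that matters: critical groups \eqref{cg} are computed in $\s$, and every $\s$-neighborhood of $u_+$ contains sign-changing functions, on which $F$ and $F_+$ differ; Proposition \ref{lm} concerns local minimizers and does not transport critical groups. The paper closes this gap with the homotopy $\psi_\tau=(1-\tau)\varphi+\tau\varphi_+$, using Propositions \ref{apb} and \ref{reg} and the compact embedding $C^\alpha_\delta(\overline\Omega)\hookrightarrow C^0_\delta(\overline\Omega)$ to show that $u_+$ is an isolated critical point of $\psi_\tau$ uniformly in $\tau$ (any nearby critical points would converge to $u_+$ in $C^0_\delta(\overline\Omega)$, hence lie in ${\rm int}\,(C^0_\delta(\overline\Omega)_+)$ and be critical points of $\varphi$, contradicting \eqref{abs}), and then invokes homotopy invariance of critical groups. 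Finally, even granting the transfer, ``mountain pass point $\Rightarrow C_k=\delta_{k,1}\R$ for all $k$'' is not available for a $C^1$ functional --- one only gets $C_1\neq 0$, which does not determine the alternating sum. The paper instead computes \emph{all} the groups $C_k(\varphi_+,u_+)$ from the long exact homology sequence of the triple built from two sublevel sets of $\varphi_+$, using $K(\varphi_+)=\{0,u_+\}$, $C_k(\varphi_+,\infty)=0$ and $C_k(\varphi_+,0)=\delta_{k,0}\R$ --- a computation that itself hinges on $0$ being a local minimizer, i.e.\ on the very fact your sketch denies.
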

\begin{proof}
Reasoning as in the proof of Proposition \ref{sup2} we see that $\varphi$, $\varphi_\pm$ satisfy \eqref{c}, are unbounded from below and have a strict local minimum at $0$. Moreover we know that $0,u_\pm\in K(\varphi)$. We aim at finding a further critical point for $\varphi$. We argue by contradiction, assuming
\beq\label{abs}
K(\varphi)=\{0,u_+,u_-\}.
\eeq
In particular, all critical points of $\varphi$ are isolated. Taking $a<b$ in $\R$ s.t.\ all critical levels of $\varphi$ lie in $(a,b)$, from Theorem \ref{phf} we have
\beq\label{ph}
\sum_{k=0}^\infty(-1)^k\big({\rm dim}\,C_k(\varphi,0)+{\rm dim}\,C_k(\varphi,u_+)+{\rm dim}\,C_k(\varphi,u_-)\big)=\sum_{k=0}^\infty(-1)^k{\rm dim}\,C_k(\varphi,\infty).
\eeq
Now we will compute all critical groups of $\varphi$ both at its critical points and at infinity, then we will plug results into \eqref{ph} to get a contradiction. In doing so, we will also need to compute some critical groups of $\varphi_\pm$.
\vskip2pt
\noindent
We begin with critical groups at infinity: for all integer $k\ge 0$ we have
\beq\label{ginf}
C_k(\varphi,\infty)=C_k(\varphi_\pm,\infty)=0.
\eeq
We focus on $\varphi$ (the argument for $\varphi_\pm$ is analogous). We recall from the proof of Proposition \ref{sup2} that
\[\min\{\varphi(u_+),\varphi(u_-)\}>\varphi(0)=0.\]
We denote the unit sphere in $\s$ by
\[S=\{u\in\s:\,\|u\|=1\}.\]
Reasoning as in the proof of \eqref{sup23} we see that for all $u\in S$
\beq\label{sup31}
\lim_{\mu\to\infty}\varphi(\mu u)=-\infty.
\eeq
Moreover, taking $c<0$ small enough, we have for all $v\in\varphi^{-1}(c)$
\beq\label{sup32}
\varphi'(v)(v)<0.
\eeq
Indeed, by $\hh$ \ref{h25} there exists $\beta,M>0$ s.t. $f(x,t)t-2F(x,t)\ge\beta |t|^q$ for a.e. $x\in\Omega$ and all $|t|>M$. Then, using also $\hh$ \ref{h21}, for all $v\in\varphi^{-1}(c)$ we have
\begin{align*}
\varphi'(v)(v) &= \|v\|^2-\int_\Omega f(x,v)v\,dx \\
&= 2\varphi(v)-\int_\Omega\big(f(x,v)v-2F(x,v)\big)\,dx \\
&\le 2c-\int_{\{|v|>M\}}\beta|v|^q\,dx+\int_{\{|v|\le M\}}\Big(a_0(|v|+|v|^p)+a_0\Big(|v|+\frac{|v|^p}{p}\Big)\Big)\,dx \\
&\le 2c-\beta\|v\|_q^q+\beta M^q|\Omega|+C(M+M^p)|\Omega| \\
&\le 2c+C_M,
\end{align*}
with a constant $C_M>0$ only depending on $M$. So, choosing
\[c<\min\Big\{-\frac{C_M}{2},\,\inf_{\|u\|\le 1}\varphi(u)\Big\},\]
we get \eqref{sup32}. Now we apply the implicit function theorem \cite[Theorem 7.3]{MMP} to the function $(\mu,u)\mapsto\varphi(\mu u)$ defined in $(1,\infty)\times S$. By \eqref{sup32} we have for all $(\mu,u)\in(1,\infty)\times S$ with $\varphi(\mu u)=c$
\[\frac{\partial}{\partial\mu}\varphi(\mu u)=\frac{\varphi'(\mu u)(\mu u)}{\mu}<0,\]
hence there exists a continuous mapping $\rho:S\to(1,\infty)$ s.t. for all $(\mu,u)\in(1,\infty)\times S$
\[\varphi(\mu u) \ \begin{cases}
>c & \text{if $\mu<\rho(u)$} \\
=c & \text{if $\mu=\rho(u)$} \\
<c & \text{if $\mu>\rho(u)$.}
\end{cases}\]
So we have
\[\overline\varphi^c=\{\mu u:\,u\in S,\,\mu\in[\rho(u),\infty)\}.\]
Set also
\[E=\{\mu u:\,u\in S,\,\mu\ge 1\}.\]
We can define a continuous deformation $h:[0,1]\times E\to E$ by setting for all $(t,\mu u)\in[0,1]\times E$
\[h(t,\mu u)=\begin{cases}
(1-t)\mu u+t\rho(u)u & \text{if $\mu<\rho(u)$} \\
\mu u & \text{if $\mu\ge\rho(u)$,}
\end{cases}\]
so $\overline\varphi^c$ is a strong deformation retract of $E$. Besides, we define another continuous deformation $\tilde h:[0,1]\times E\to E$ by setting for all $(t,\mu u)\in[0,1]\times E$
\[\tilde h(t,\mu u)=(1-t)\mu u+tu,\]
showing that $S$ is also a strong deformation retract of $E$. By the choice of $c$, \eqref{cginf}, and \cite[Corollary 6.15]{MMP} we have for all $k\ge 0$
\[C_k(\varphi,\infty)=H_k(\s,\overline\varphi^c)=H_k(\s,E)=H_k(\s,S),\]
and the latter is $0$ by \cite[Propositions 6.24, 6.25]{MMP} (recall that $S$ is contractible in itself, as ${\rm dim}\,\s=\infty$). Thus we have \eqref{ginf}.
\vskip2pt
\noindent
We compute now the critical points at $0$: for all $k\ge 0$
\beq\label{g0}
C_k(\varphi,0)=C_k(\varphi_\pm,0)=\delta_{k,0}\R.
\eeq
Reasoning as in the proof of Proposition \ref{sup2} and using \eqref{abs}, we see that $0$ is a strict local minimizer of $\varphi$, so \eqref{g0} follows from \eqref{lmcg} (the argument for $\varphi_\pm$ is analogous).
\vskip2pt
\noindent
Finally we compute the critical groups at $u_\pm$: for all $k\ge 0$ we have
\beq\label{g+}
C_k(\varphi,u_\pm)=\delta_{k,1}\R.
\eeq
We consider $u_+$ (the argument for $u_-$ is analogous). First we note that
\beq\label{hom}
C_k(\varphi,u_+)=C_k(\varphi_+,u_+).
\eeq
Indeed, for all $\tau\in[0,1]$ we define $\psi_\tau\in C^1(\s)$ by setting for all $u\in\s$
\[\psi_\tau(u)=(1-\tau)\varphi(u)+\tau\varphi_+(u).\]
Clearly we have $u_+\in K(\psi_\tau)$ for all $\tau\in[0,1]$. Moreover, $u_+$ is an isolated critical point of $\psi_\tau$ uniformly with respect to $\tau$, as we shall prove arguing by contradiction: assume that there exist sequences $(u_n)$ in $\s\setminus\{u_+\}$, $(\tau_n)$ in $(0,1)$ s.t. $u_n\to u_+$ in $\s$, $\tau_n\to 0$, and $\psi'_{\tau_n}(u_n)=0$ in $H^{-s}(\Omega)$ for all $n\in\N$. Then, for all $n\in\N$, $u_n$ is a solution of the \eqref{pro}-type problem
\[\begin{cases}
\fl u_n=(1-\tau_n)f(x,u_n)+\tau_n f_+(x,u_n) & \text{in $\Omega$} \\
u_n=0 & \text{in $\Omega^c$,}
\end{cases}\]
with a reaction term satisfying ${\bf H}_0$ uniformly (i.e., with $a_0$, $p$ independent of $n$). By Proposition \ref{apb} the sequence $(u_n)$ is bounded in $L^\infty(\Omega)$, and by Proposition \ref{reg} there exist $\alpha\in(0,1)$, $C>0$ s.t.\ for all $n\in\N$ we have $u_n\in C^\alpha_\delta(\overline\Omega)$ and $\|u_n\|_{\alpha,\delta}\le C$.
\vskip2pt
\noindent
By the compact embedding $C^\alpha_\delta(\overline\Omega)\hookrightarrow C^0_\delta(\overline\Omega)$, passing if necessary to a subsequence we have $u_n\to u_+$ in $C^0_\delta(\overline\Omega)$, hence $u_n\in{\rm int}\,(C^0_\delta(\overline\Omega)_+)$ for all $n\in\N$ large enough. This in turn implies that $u_n$ is a solution of \eqref{pro}, i.e., a critical point of $\varphi$ different from $0$ and $u_\pm$, against \eqref{abs}.
\vskip2pt
\noindent
So, by homotopy invariance of critical groups (see \cite[Theorem 5.6]{C}), we see that $C_k(\psi_\tau,u_+)$ is independent of $\tau\in[0,1]$. Noting that $\psi_0=\varphi$ and $\psi_1=\varphi_+$, and thus achieve \eqref{hom}.
\vskip2pt
\noindent
By \eqref{hom}, we are reduced to computing $C_k(\varphi_+,u_+)$. Recall that $K(\varphi_+)=\{0,u_+\}$, and fix $a,b\in\R$ s.t.
\[a<\varphi_+(0)<b<\varphi_+(u_+),\]
then set $A=\overline\varphi_+^a$, $B=\overline\varphi_+^b$. We have $A\subset B$, and the following long sequence is exact due to \cite[Proposition 6.14]{MMP}:
\[\ldots\to H_k(\s,A)\xrightarrow{j_*} H_k(\s,B)\xrightarrow{\partial_*} H_{k-1}(B,A)\xrightarrow{i_*} H_{k-1}(\s,A)\to\ldots\]
Here $j_*$, $i_*$ are the group homomorphisms induced by the inclusion mappings $j:(\s,A)\to(\s,B)$ and $i:(B,A)\to(\s,A)$, respectively, and $\partial_*$ is the boundary homomorphism (see \cite[Definition 6.9]{MMP}). By Proposition \ref{subs} and \eqref{cginf} we have
\[H_k(\s,A)=C_k(\varphi_+,\infty), \ H_k(\s,B)=C_k(\varphi_+,u_+), \ H_{k-1}(B,A)=C_{k-1}(\varphi_+,0).\]
So, recalling \eqref{ginf}, the exact sequence rephrases as
\[0\to C_k(\varphi_+,u_+)\to C_{k-1}(\varphi_+,0)\to 0,\]
which by \eqref{g0} yields
\[C_k(\varphi_+,u_+)=\delta_{(k-1),0}\R=\delta_{k,1}\R.\]
By \eqref{hom}, we get \eqref{g+}.
\vskip2pt
\noindent
Plugging \eqref{ginf}, \eqref{g0}, and \eqref{g+} into \eqref{ph}, we have
\[\sum_{k=0}^\infty(-1)^k(\delta_{k,0}+2\delta_{k,1})=0,\]
namely $-1=0$, a contradiction. Thus, \eqref{abs} cannot hold, i.e., there exists a further critical point $\tilde u\in K(\varphi)\setminus\{0,u_+,u_-\}$. By Proposition \ref{reg}, we see that $u\in C^0_\delta(\overline\Omega)$ and is a solution of \eqref{pro}.
\end{proof}

\begin{remark}
A comparison between Theorems \ref{sub3} and \ref{sup3} is now in order. Though formally the statements of such results coincide, the underlying structure of the critical set $K(\varphi)$ is widely different in the two cases: in the sublinear case we have two local minimizers $u_+$, $u_-$ and a third non-zero critical point $\tilde u$, typically of mountain pass type; while in the superlinear case we have two mountain pass-type points $u_+$, $u_-$ and a third non-zero critical point of undetermined nature $\tilde u$.
\end{remark}

\noindent
{\small {\bf Aknowledgement.} The second author is a member of the Gruppo Nazionale per l'Analisi Matematica, la Probabilit\`a e le loro Applicazioni (GNAMPA) of the Istituto Nazionale di Alta Matematica (INdAM).}

\bigskip


\begin{thebibliography}{99}

\bibitem{ABC}
{\sc A.\ Ambrosetti, H.\ Brezis, G.\ Cerami,}
Combined effects of concave and convex nonlinearities in some elliptic problems,
{\em J. Funct. Anal.} {\bf 122} (1994) 519--543.

\bibitem{AR}
{\sc A.\ Ambrosetti, P.H.\ Rabinowitz,}
Dual variational methods in critical point theory and applications,
{\em J. Funct. Anal.} {\bf 14} (1973) 349--381.

\bibitem{AP}
{\sc G.\ Autuori, P.\ Pucci},
Elliptic problems involving the fractional Laplacian in $\R^N$,
{\em J. Differential Equations} {\bf 255} (2013) 2340--2362.

\bibitem{BSW}
{\sc T.\ Bartsch, A.\ Szulkin, M.\ Willem},
Morse theory and nonlinear differential equations,
Handbook of Global Analysis, Elsevier, Amsterdam (2008) 41--73.

\bibitem{BCSS}
{\sc B.\ Barrios, E.\ Colorado, R.\ Servadei, F.\ Soria},
A critical fractional equation with concave-convex power nonlinearities,
{\em Ann. Inst. H. Poincar\'e Anal. Non Lin\'eaire} {\bf 32} (2015) 875--900.

\bibitem{BMS}
{\sc Z.\ Binlin, G.\ Molica Bisci, R.\ Servadei},
Superlinear nonlocal fractional problems with infinitely many solutions,
{\em Nonlinearity} {\bf 28} (2015) 2247--2264.

\bibitem{BP}
{\sc L.\ Brasco, E.\ Parini},
The second eigenvalue of the fractional $p$-Laplacian,
to appear in {\em Adv. Calc. Var}, DOI:10.1515/acv-2015-0007.

\bibitem{B}
{\sc H.\ Brezis,}
Functional analysis, Sobolev spaces and partial differential equations,
Springer, New York (2011).

\bibitem{BN}
{\sc H.\ Brezis, L.\ Nirenberg},
$H^1$ versus $C^1$ minimizers,
{\em C.\ R.\ Acad.\ Sci. Paris} {\bf 317} (1993) 465--472.

\bibitem{CS1}
{\sc X.\ Cabr\'e, Y.\ Sire},
Nonlinear equations for fractional Laplacians I: Regularity, maximum principles, and Hamiltonian estimates,
{\em Ann. Inst. Henri Poincar\'e (C) Nonlinear Analysis} {\bf 31} (2014) 23--53.

\bibitem{CS2}
{\sc X.\ Cabr\'e, Y.\ Sire},
Nonlinear equations for fractional Laplacians II: Existence, uniqueness, and qualitative properties of solutions,
{\em Trans. Amer. Math. Soc.} {\bf 367} (2015) 911--941. 

\bibitem{C1}
{\sc L.\ Caffarelli},
Nonlocal diffusions, drifts and games,
in {\sc H.\ Holden, K.H.\ Karlsen} (eds.), Nonlinear partial differential equations, Springer, New York (2012).

\bibitem{C}
{\sc K.C.\ Chang},
Infinite Dimensional Morse Theory and Multiple Solutions Problems,
Birkh\"auser, Boston (1993).

\bibitem{CW}
{\sc X.\ Chang, Z.Q.\ Wang},
Nodal and multiple solutions of nonlinear problems involving the fractional Laplacian,
{\em J. Differential Equations} {\bf 256} (2014) 2965--2992.

\bibitem{CFG}
{\sc M.\ Cuesta, D.\ de Figueiredo, J.P.\ Gossez},
The beginning of the Fu\v{c}\'{\i}k spectrum for the $p$-Laplacian,
{\em J. Differential Equations} {\bf 159} (1999) 212--238.

\bibitem{DPV}
{\sc E.\ Di Nezza, G.\ Palatucci, E.\ Valdinoci},
Hitchhiker's guide to the fractional Sobolev spaces,
{\em Bull. Sci. Math.} {\bf 136} (2012) 521--573.

\bibitem{F}
{\sc A.\ Fiscella},
Saddle point solutions for non-local elliptic operators,
{\em Topol. Methods Nonlinear Anal.} {\bf 44} (2014) 527--538. 

\bibitem{FSV}
{\sc A.\ Fiscella, R.\ Servadei, E.\ Valdinoci},
Density properties for fractional Sobolev spaces,
{\em Ann. Acad. Sci. Fenn. Math.} {\bf 40} (2015) 235--253.

\bibitem{FP}
\textsc{G.\ Franzina, G.\ Palatucci},
Fractional $p$-eigenvalues,
{\em Riv. Mat. Univ. Parma} {\bf 5} (2014).

\bibitem{GS}
{\sc S.\ Goyal, K.\ Sreenadh},
On the Fu\v{c}\'{\i}k spectrum of non-local elliptic operators,
{\em Nonlinear Differ. Equ. Appl.} {\bf 21} (2014) 567--588.

\bibitem{GS1}
{\sc A.\ Greco, R.\ Servadei},
Hopf's lemma and constrained radial symmetry for the fractional Laplacian,
to appear in {\em Math. Res. Letters}.

\bibitem{ILPS}
\textsc{A.\ Iannizzotto, S.\ Liu, K.\ Perera, M.\ Squassina},  
Existence results for fractional $p$-Laplacian problems via Morse theory,
{\em Adv. Calc. Var.} {\bf 9} (2016) 101--125.

\bibitem{IMS}
\textsc{A.\ Iannizzotto, S.\ Mosconi, M.\ Squassina},
$H^s$ versus $C^0$-weighted minimizers,
{\em Nonlinear Differ. Equ. Appl.} {\bf 22} (2015), 477--497.

\bibitem{IMS1}
\textsc{A.\ Iannizzotto, S.\ Mosconi, M.\ Squassina},
Global H\"older regularity for the fractional $p$-Laplacian,
to appear in {\em Rev. Mat. Iberoam.}

\bibitem{IS}
{\sc A.\ Iannizzotto, M.\ Squassina},
$1/2$-Laplacian problems with exponential nonlinearity,
{\em J. Math. Anal. Appl.} {\bf 414} (2014) 372--385.

\bibitem{IS1}
{\sc A.\ Iannizzotto, M.\ Squassina},
Weyl-type laws for fractional $p$-eigenvalue problems,
{\em Asymptotic Anal.} {\bf 88} (2014) 233--245.

\bibitem{MR}
{\sc G.\ Molica Bisci, V.D.\ R\u{A}dulescu,}
Multiplicity results for elliptic fractional equations with subcritical term,
{\em Nonlinear Differ. Equ. Appl.} {\bf 22} (2015) 721--739. 

\bibitem{MMP}
{\sc D.\ Motreanu, V.V.\ Motreanu, N.S.\ Papageorgiou},
Topological and variational methods with applications to nonlinear boundary value problems,
Springer, New York (2014).

\bibitem{MP}
{\sc D.\ Mugnai, D.\ Pagliardini},
Existence and multiplicity results for the fractional Laplacian in bounded domains,
to appear in {\em Adv. Calc. Var.}, DOI:10.1515/acv-2015-0032.

\bibitem{PSY}
{\sc K.\ Perera, M.\ Squassina, Y.\ Yang},
A note on the Dancer-Fu\v{c}\'{\i}k spectra of the fractional p-Laplacian and Laplacian operators,
{\em Adv. Nonlinear Anal.} {\bf 4} (2015) 13--23.

\bibitem{PS}
{\sc P.\ Pucci, J.\ Serrin},
A mountain pass theorem,
{\em J. Differential Equations} {\bf 60} (1985) 142--149.

\bibitem{R}
{\sc P.H.\ Rabinowitz,}
Minimax methods in critical point theory with applications to differential equations,
Amer. Math. Soc., Providence (1986).

\bibitem{RS}
{\sc X.\ Ros-Oton, J.\ Serra},
The Dirichlet problem for the fractional Laplacian: regularity up to the boundary,
{\em J. Math. Pures Appl.} {\bf 101} (2014) 275-302.

\bibitem{SV}
{\sc R.\ Servadei, E.\ Valdinoci},
Mountain pass solutions for non-local elliptic operators,
{\em J. Math. Anal. Appl.} {\bf 389} (2012) 887--898.

\bibitem{SV1}
{\sc R.\ Servadei, E.\ Valdinoci},
Variational methods for non-local operators of elliptic type,
{\em Discrete Contin. Dyn. Syst.} {\bf 33} (2013) 2105--2137.

\bibitem{SV2}
{\sc R.\ Servadei, E.\ Valdinoci},
On the spectrum of two different fractional operators,
{\em Proc. Roy. Soc. Edinburgh Sec. A} {\bf 144} (2014), 831--855.

\bibitem{W}
{\sc Z.Q.\ Wang},
On a superlinear elliptic equation,
{\em Ann. Inst. H. Poincar\'e Anal. Non Lin\'eaire} {\bf 8} (1991) 43-57.

\bibitem{ZF}
{\sc B.\ Zhang, M.\ Ferrara},
Multiplicity of solutions for a class of superlinear non-local fractional equations,
{\em Complex Var. Elliptic Equ.} {\bf 60} (2015) 583--595.

\end{thebibliography}
\end{document}